\newtheorem{theorem}{Theorem}[section]
\newtheorem{lemma}[theorem]{Lemma}
\newtheorem{remark}[theorem]{Remark}
\newtheorem{example}[theorem]{Example}
\newtheorem{algorithm}[theorem]{Algorithm}
\numberwithin{equation}{section}
\numberwithin{figure}{section}
\begin{document}

\begin{sloppypar}
\setlength\arraycolsep{2pt}
\date{\today}

\title{Efficient Numerical Reconstruction of Wave Equation Sources via Droplet-Induced Asymptotics}
\author{Shutong Hou$^{1}$, Mourad Sini$^{2}$, Haibing Wang$^{1,}$\footnote{Corresponding author, E-mail: hbwang@seu.edu.cn}
\\$^1$School of Mathematics, Southeast University, Nanjing 210096, P.R. China\;
\\$^2$RICAM, Austrian Academy of Sciences, Altenbergerstrasse 69, A-4040, Linz, Austria\,
\\E-mail: 230228431@seu.edu.cn, mourad.sini@oeaw.ac.at, hbwang@seu.edu.cn
}

\maketitle
\begin{abstract}
In this paper, we develop and numerically implement a novel approach for solving the inverse source problem of the acoustic wave equation in three dimensions. By injecting a small high-contrast droplet into the medium, we exploit the resulting wave field perturbation measured at a single external point over time. The method enables stable source reconstructions where conventional approaches fail due to ill-posedness, with potential applications in medical imaging and non-destructive testing. Key contributions include:
\begin{enumerate}
\item Implementation of a theoretically justified asymptotic expansion, from \cite{S-W2022}, using the eigensystem of the Newtonian operator, with error analysis for the spectral truncation.

\item Novel numerical schemes for solving the time-domain Lippmann-Schwinger equation and reconstructing the source via Riesz basis expansions and mollification-based numerical differentiations.

\item Reconstruction requiring only single-point measurements, overcoming traditional spatial data limitations.

\item 3D numerical experiments demonstrating accurate source recovery under noise (SNR of the order $1/a$), with error analysis for the droplet size (of the order $a$) and the number of spectral modes $N$.
\end{enumerate}

\bigskip
{\bf Keywords:} Inverse source problem, wave equation, asymptotic expansion, contrast agents.\\

{\bf MSC(2020):} 35R30, 65M32, 35C20.

\end{abstract}

\section{Introduction}
\label{sec:intro}
In this paper, we address an inverse source problem for the wave equation, focusing on the reconstruction of an unknown source term from external measurements. Let $A_0(x)$ and $B_0(x)$ denote two scalar coefficients modeling the background medium. These coefficients represent, respectively, the inverse of the bulk modulus and the mass density in acoustics. We assume that these coefficients are positive constants, i.e., $A_0(x)\equiv A_0$ and $B_0(x)\equiv B_0$. Define $c_0=(A_0B_0)^{-1/2}$. Furthermore, assume that $J(x,\,t)$ is a source compactly supported in a bounded Lipschitz domain $\Omega \subset \mathbb{R}^3$, with $J=0$ in $\Omega$ for $t<0$. The field $V$ generated by the source $J$ in the homogeneous background medium then satisfies
\begin{equation}
    \left\{
\begin{aligned}
& c_0^{-2}V_{tt}-\Delta V = J & \quad   & \mathrm{in}\  \mathbb{R}^{3} \times (0,\,\mathcal{T}), \\
& V|_{t=0}=0, \quad V_t |_{t=0}=0  & \quad  & \mathrm{in} \ \mathbb{R}^{3}.
\end{aligned}
\right.
\label{eq:V}
\end{equation}

Inverse source problems are inherently ill-posed, and the measurement data available in practical applications are often limited and corrupted by noise. Consequently, reconstructions of the sources pose significant challenges in both theoretical analysis and numerical computation. To obtain accurate and stable reconstructions, we address the problems from two complementary perspectives: modeling and measurement.

On the one hand, from a mathematical standpoint, a sequence of well-posed approximate problems can be constructed to approach the original ill-posed formulation. We then investigate the convergence of these approximate solutions and derive the corresponding error estimates. In the literature, various strategies have been proposed to establish the uniqueness and stability of inverse source problems. By using Carleman estimates and the unique continuation principle, the uniqueness and conditional stability of reconstructing source terms of specific forms from boundary measurement data in bounded domains have been demonstrated \cite{B-Y2017,Klibanov1992}. Using the boundary controllability of the wave equation, the spatial component of a source term can also be uniquely determined from boundary observations \cite{Yamamoto1995}. In unbounded domains, uniqueness results for several special inverse source problems have been obtained using the Laplace transform \cite{H-K-Z2020}. Assuming separability in the spatial and temporal variables of the source term, the paper \cite{C-L-Y2023} proves the uniqueness and conditional stability based on two types of measurements, while the paper \cite{K-L-Y2023} establishes the uniqueness for simultaneously recovering both temporal and spatial components without assuming nonvanishing initial conditions at $t=0$. In the case of a single moving point source, by placing six point sources on the boundary and measuring the corresponding wave field, the uniqueness and Lipschitz stability of the inverse source problem can be established \cite{J-E-T2022}. For a time-dependent semilinear hyperbolic equation, the paper \cite{L-L-X2024} uniquely determines the unknown nonlinearity and/or the unknown sources by using passive or active boundary observations in several generic scenarios. For an inverse source problem involving the stochastic multiterm time-fractional diffusion-wave equation driven by a fractional Brownian motion, the uniqueness is proven for a certain class of functions and the instability is characterized \cite{F-Z-L-W2022}.

On the other hand, from an applied perspective, the instability due to noisy measurement data can be alleviated by enhancing the data itself, often through the integration of targeted mathematical modeling. Specifically, using external intervention methods to enhance the signal of the target can significantly improve the sensitivity of target detection and reduce the impact of noise on the solution of inverse problems. Such interventions include exciting energy into the medium and injecting contrast agents to amplify the signal of the target. In the field of medical imaging, common contrast agents include microbubbles, droplets, nanoparticles, inclusions, and so on, made from special materials; see \cite{B-B-C2011,M-O2014, Qui-al-2009, Quaia-2007, Sheeran-Dayton, I-I-F2018, L-C2015, intro4, dielectric3}  for in depth descriptions and details. The motivation is that the injection of contrast agents modifies the physical parameters of the medium, making their effects more pronounced in the measurements, which in turn enhances the stability and accuracy of the reconstructions. An approach to the mathematical analysis of such imaging modalities can be found in \cite{C-C-M2018,D-G-S2021, G-S2022, G-S-2023, S-S-W-2024, G-S-S-2025, S-S-2025} and the references therein. The main idea there is that by contrasting the fields measured before and after injecting the contrasting agents, one can reconstruct the total fields at the locations of those agents. This is justified in the time harmonic regimes (for both the electromagnetic and acoustic models) when the incident waves are transmitted at frequencies close to the resonating frequencies of the used agents; see \cite{D-G-S2021,G-S2022, G-S-2023} for details. These ideas are extended to the time-domain inverse problems in \cite{S-W2022,S-S-W-2024, S-S-2025}.

In this paper, we reconstruct the unknown source by injecting a high contrast droplet into the medium. Let $D=z+aB$ be a droplet injected into $\Omega$ at the location point $z\in \mathbb{R}^3$, where $a$ is a small parameter and $B\subset \mathbb{R}^3$ is a bounded Lipschitz domain containing the origin. Define
\begin{equation}
    A(x):=\left\{
\begin{aligned}
& A_0 & \quad  & \mathrm{in}\  \mathbb{R}^{3} \setminus D, \\
& A_1 & \quad  & \mathrm{in}\ D,
\end{aligned}
\right.
\label{eq:Ax}
\end{equation}
where $A_1$ is a positive constant with $A_1\sim a^{-2}$ as $a\ll 1$. In acoustics, bubbles with such small values of the bulk and moderate mass density can be designed; see \cite{Z-F2018} for details.

Set
\begin{equation*}
c(x):=(A(x)B_0)^{-1/2}, \quad c_1 :=(A_1 B_0)^{-1/2} \sim a.
\end{equation*}
Define the contrast as
\begin{equation*}
\chi(x):=\frac{c_0^2}{c^2(x)}-1, \quad x\in \mathbb{R}^3.
\end{equation*}
Injecting the droplet $D$ into $\Omega$, the field $U$ generated by $J$ satisfies
\begin{equation}
    \left\{
\begin{aligned}
& c^{-2}(x)U_{tt}-\Delta U = J & \quad   & \mathrm{in}\  \mathbb{R}^{3} \times (0,\,\mathcal{T}), \\
& U|_{t=0}=0, \quad U_t |_{t=0}=0  & \quad  & \mathrm{in} \ \mathbb{R}^{3}.
\end{aligned}
\right.
\label{eq:U}
\end{equation}
The solvability and stability for the direct problems \eqref{eq:V} and \eqref{eq:U} can be found in \cite[Theorem 8.1]{Isakov2017}. Let $W:=U-V$ denote the difference between the perturbed and unperturbed fields. From \eqref{eq:V} and \eqref{eq:U}, we have
\begin{equation}
    \left\{
\begin{aligned}
& c_0^{-2} W_{tt}-\Delta W = -\frac{\chi}{c_0^2} U_{tt} & \quad   & \mathrm{in}\  \mathbb{R}^{3} \times (0,\,\mathcal{T}), \\
& W|_{t=0}=0, \quad W_t |_{t=0}=0  & \quad  & \mathrm{in} \ \mathbb{R}^{3}.
\end{aligned}
\right.
\label{eq:W}
\end{equation}

The inverse problem of our interest is formulated as follows. \\
\textbf{Inverse problem: }Based on the measurement of $U(x^\star,\,t)$ collected at a fixed receiver point $x^\star \in \mathbb{R}^3\setminus \Omega$ over a time interval, with multiple droplets injected one after another, reconstruct the unknown source $J(x,\,t)$ for $(x,\,t)\in \Omega\times(0,\,T)$, where $T< \mathcal{T}$.

We observe that the measured data have $4$ dimensions, $3$ in space, related to the multiple injected droplets, and $1$ in time, while the unknown source function is modeled using $4$ dimensions as well, which makes the inverse problem not overdetermined unlike the traditional inverse source problems. Another key advantage of this approach is that it requires measurements of the field $U$ at only a single fixed point $x^\star \in \mathbb{R}^3\setminus \Omega$, which is particularly appealing in practical applications. 

The novelty of this paper lies in proposing an efficient algorithm for solving the above inverse source problem, namely reconstructing the source term in the wave equation, using the asymptotic expansion method developed in \cite{S-W2022}. To do this, we first use an efficient algorithm for solving the forward problem, namely the Lippmann-Schwinger equation. This is achieved by combining the convolution spline method in time, a high-order volume integral with weak spatial singularity, and spatial interpolations. The resulting method is used to simulate the measurement data. Subsequently, for the inversion procedure, the wave field is reconstructed following the approach in \cite{S-W2022} where a Riesz basis, built up from the eigensystem of the Newtonian operator stated on each used droplet, is used to recover its Riesz series expansion. We provide a rigorous error analysis for the spectral truncation as well as a stability analysis for the reconstruction of the wave field. The source is then recovered using second-order numerical differentiation, which constitutes a mildly ill-posed problem. To address this issue, we introduce a smoothing technique for computing the second-order derivative numerically. The proposed reconstruction method is particularly effective for noisy measurement data and does not rely on any prior assumptions about the data. The efficiency and accuracy of the proposed algorithm are evaluated through numerical experiments.

The remainder of this paper is organized as follows. In Section \ref{sec:Theoretical results on the reconstruction}, we show the theoretical results for the asymptotics of the wave field and the source reconstruction method, where the Newtonian operator is investigated in Appendix. In Section \ref{sec:Numerical algorithms}, we present the corresponding numerical algorithms. Section \ref{sec:Numerical experiments} provides numerical experiments to demonstrate the effectiveness of the proposed algorithms. Finally, conclusions are drawn in Section \ref{sec:Conclusions}.

\section{Theoretical results}
\label{sec:Theoretical results on the reconstruction}

In this section, we present the main theoretical results of this paper, including the asymptotic expansion of the wave field after injecting the droplet, the error estimate introduced by the finite-dimensional truncation, the source reconstruction method and its stability analysis.

\subsection{The droplet-induced asymptotics of the field}

To proceed, we introduce the function space
\begin{equation*}
H_0^\mu (0,\,\mathcal{T}):=\{ g|_{(0,\,\mathcal{T})} : g\in H^\mu(\mathbb{R}) \ \mathrm{with} \ g\equiv 0 \ \mathrm{in} \ (-\infty,\,0) \}, \quad \mu\in \mathbb{R},
\end{equation*}
and generalize it to the $E$-valued function space, denoted by $H_0^\mu (0,\,\mathcal{T};\,E)$ with the Hilbert space $E$. Define
\begin{equation*}
LT(\sigma,\,E):=\{f \in \mathcal{D}^{\prime}_{+}(E): e^{-\sigma t}f \in \mathcal{S}^{\prime}_{+}(E)\},\quad \sigma>0,
\end{equation*}
where $\mathcal{D}^{\prime}_{+}(E)$ and $\mathcal{S}^{\prime}_{+}(E)$ denote the sets of distributions and temperate distributions on $\mathbb{R}$, respectively, with values in $E$ and supports in $[0,\,\infty)$. Then we define the space
\begin{equation*}
H_{0,\,\sigma}^\mu (0,\,\mathcal{T};\,E):=\{ f\in LT(\sigma,\, E) : e^{-\sigma t}\partial_t^k f \in L_0^2(0,\,\mathcal{T};\,E), \, k=1,\,\ldots,\,\mu \}, \quad \mu\in \mathbb{N}^+.
\end{equation*}
For nonnegative integer $\mu$, we introduce the norm
\begin{equation*}
\|f\|_{H_{0,\,\sigma}^\mu (0,\,\mathcal{T};\,E)}:=\Big( \int_0^{\mathcal{T}} e^{-2\sigma t} \Big[ \|f \|^2_E+\sum\limits_{k=1}^{\mu} \mathcal{T}^{2k} \Big\|\frac{\partial^k f}{\partial t^k} \Big\|^2_E \Big] dt \Big)^{1/2}.
\end{equation*}

We assume that the source $J(x,\,t)$ is compactly supported in $\Omega\times [0,\,T]$ and $J \in H_{0,\,\sigma}^p \big( 0,\,T;L^2(\Omega) \big)$, with $p\in \mathbb{N}$, where $\Omega$ is a bounded Lipschitz domain. Note that the solution $V$ can be expressed by a retarded volume potential
\begin{equation}
V(x,\,t):=\int_{\Omega} \frac{J(y,\,t-c_0^{-1}|x-y|)}{4\pi |x-y|} dy.
\label{eq:volume potential J}
\end{equation}
From \cite[Theorem 3.2]{L-M2015}, we have $V \in H_{0,\,\sigma}^p \big( 0,\,+\infty;\,H^1(\mathbb{R}^3) \big)$.

Moreover, define the retarded volume potential $V_D$ by
\begin{equation}
V_D[f](x,\,t):=\int_{D} \frac{f(y,\,t-c_0^{-1}|x-y|)}{4\pi |x-y|} dy, \quad (x,\,t)\in \mathbb{R}^3 \times (0,\,\mathcal{T}).
\end{equation}
From \eqref{eq:W}, we are led to the time-domain Lippmann-Schwinger equation
\begin{equation}
U(x,\,t)+\frac{\chi_1}{c_0^2}V_D[U_{tt}](x,\,t)=V(x,\,t), \quad (x,\,t)\in \mathbb{R}^3 \times (0,\,\mathcal{T}),
\label{eq:Lippmann-Schwinger_equation 1}
\end{equation}
or
\begin{equation}
W(x,\,t)+\frac{\chi_1}{c_0^2}V_D[W_{tt}](x,\,t)=-\frac{\chi_1}{c_0^2} V_D[V_{tt}](x,\,t), \quad (x,\,t)\in \mathbb{R}^3 \times (0,\,\mathcal{T}),
\label{eq:Lippmann-Schwinger_equation 2}
\end{equation}
where
\begin{equation*}
\chi_1 := \frac{c_0^2}{c_1^2} -1 \sim a^{-2} \quad \mathrm{for} \ a\ll 1.
\end{equation*}
For $V \in H_{0,\,\sigma}^{\mu+2} \big(0,\,\mathcal{T};L^2(D) \big)$, by employing the convolution quadrature based argument in \cite{Lubich1994}, we can prove that \eqref{eq:Lippmann-Schwinger_equation 2} has a unique solution in $H_{0,\,\sigma}^\mu \big(0,\,\mathcal{T};L^2(D) \big)$. 

The source reconstruction method presented in this paper is based on the asymptotic expansion of the wave field following the injection of the droplet, utilizing the eigensystem of the Newtonian operator; see Appendix for details of the eigensystem. In the literature, asymptotic expansions of physical fields generated by small droplets are well developed for elliptic models. A heat conduction problem and an acoustic scattering problem with many holes are studied; see \cite{S-W2019,S-W-Y2021} for details. To keep the presentation complete, the asymptotic behavior of the field $U(x,\,t)$ for the wave equation model as $a\to 0$ is given in the following theorem; see \cite{S-W2022} for details.

\begin{theorem}
\label{the:W_expansion} Assume that $J$ is compactly supported in $\Omega \times [0,\,T]$ and $J \in H^6_{0,\,\sigma}(0,\,T;\,L^2(\Omega))$. Then we have the expansion
\begin{equation}
\begin{aligned}
W(x,\,t)  = & -\sum_{n=1}^{+\infty} \frac{1}{4\pi |x-z| \lambda_n} \Big(\int_D e_n(y) dy \Big)^2 V(z,\,t-c_0^{-1}|x-z|)  \\ 
& +\sum_{n=1}^{+\infty} \frac{c_1 \lambda_n^{-\frac{3}{2}}}{4\pi |x-z|} \Big(\int_D e_n(y) dy \Big)^2 \int_{c_0^{-1} |x-z|}^t \sin \Big[\frac{c_1}{\sqrt{\lambda_n}}(t-s) \Big] V(z,\,s-c_0^{-1}|x-z|) ds \\
& + O(a^2), \quad (x,\,t)\in (\mathbb{R}^3\setminus K)\times (0,\,\mathcal{T}),
\end{aligned}
\label{eq:W_expansion for inf}
\end{equation}
where $K$ is any bounded domain in $\mathbb{R}^3$ such that $\overline{D} \subset\subset K$. The expansion holds pointwise in both time and space. Here $\{ \lambda_n, \, e_n \}_{n\in \mathbb{N}^+}$ is the family of the eigenelements of the Newtonian operator $\mathcal{N}: L^2(D)\to L^2(D)$ defined by
\begin{equation}
\mathcal{N}(f)(x):=\int_D \frac{f(y)}{4\pi |x-y|}dy.
\label{eq:Newton operator}
\end{equation}
Note that, the eigenfunctions are normalized, i.e., $\langle e_n,e_n \rangle=1$ for any $n\in \mathbb{N}^+$.
\end{theorem}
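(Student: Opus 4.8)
My plan is to start from the time-domain Lippmann--Schwinger equation \eqref{eq:Lippmann-Schwinger_equation 2} for $W$ and resolve it by a Neumann-type iteration in which the small parameter is $\chi_1 c_1^2/c_0^2\sim 1$ but the geometric factor coming from the droplet size $a$ makes the successive corrections decay. The key structural observation is that $V_D[\cdot]$, restricted to $D$, is intimately related to the Newtonian operator $\mathcal{N}$: for $x,y\in D$ one has $|x-y|\le 2a\,\mathrm{diam}(B)$, so that, writing $V_D[f](x,t)$ with the retardation expanded, $V_D[f](x,t)=\mathcal{N}_D[f(\cdot,t)](x)+(\text{retardation corrections of order }a)$, where $\mathcal{N}_D$ is the Newtonian operator on $D$. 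The first step, then, is to rescale to the reference domain $B$ via $x=z+a\xi$; under this change of variables the Newtonian operator on $D$ scales as $a^2$ times the Newtonian operator on $B$, i.e.\ its eigenvalues are $a^2\lambda_n$ (with $\lambda_n$ the eigenvalues on the fixed domain $B$ — or equivalently one keeps $D$ and tracks the $a$-dependence of $\lambda_n(D)\sim a^2$), and the eigenfunctions rescale accordingly. This is the content of the Appendix on the Newtonian operator, which I would invoke.

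The second step is to project the rescaled Lippmann--Schwinger equation onto the eigenbasis $\{e_n\}$ of $\mathcal{N}$ on $D$. Setting $w_n(t):=\langle W(\cdot,t)|_D, e_n\rangle$ (and similarly for $V$), the leading-order system decouples mode by mode into a family of ODEs: ignoring retardation inside $D$ and the error terms, \eqref{eq:Lippmann-Schwinger_equation 2} restricted to $D$ becomes, for each $n$,
\begin{equation*}
w_n(t)+\frac{\chi_1}{c_0^2}\lambda_n\,\ddot w_n(t)=-\frac{\chi_1}{c_0^2}\lambda_n\,\ddot v_n(t),
\end{equation*}
where $v_n(t)=\langle V(\cdot,t)|_D,e_n\rangle$. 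Since $\chi_1/c_0^2=1/c_1^2$ and $c_1^2=c_0^2/(1+\chi_1)$, the coefficient $\chi_1\lambda_n/c_0^2=\lambda_n/c_1^2$; because $\lambda_n\sim a^2$ and $c_1^2\sim a^2$, this ratio is an $O(1)$ quantity of the form $\lambda_n/c_1^2$, and the ODE for $U_n=V_n+W_n$ reads $\ddot U_n+(c_1^2/\lambda_n)U_n=0$ with a forcing governed by $V$; its solution is exactly the combination of a reproduced value of $V$ and a Duhamel integral with kernel $\sin[(c_1/\sqrt{\lambda_n})(t-s)]$, matching the two sums in \eqref{eq:W_expansion for inf}. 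One also uses that $V$ is essentially constant across the tiny droplet, so $v_n(t)\approx V(z,t)\int_D e_n(y)\,dy$, which produces the factor $(\int_D e_n\,dy)^2$ after pairing the reconstruction of $W$ inside $D$ with the radiation of $W$ to the far point $x$ through $V_D$, whose kernel $1/(4\pi|x-y|)\approx 1/(4\pi|x-z|)$ for $x$ outside $K$.

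The third step is the radiation/representation step: once $W|_D$ is known modewise, $W(x,t)$ for $x\notin K$ is recovered from \eqref{eq:Lippmann-Schwinger_equation 2} as $W(x,t)=-\frac{\chi_1}{c_0^2}V_D[W_{tt}+V_{tt}](x,t)=-\frac{1}{c_1^2}V_D[U_{tt}](x,t)$, and expanding $V_D$ about $y=z$ (both in the amplitude $1/|x-y|$ and in the retardation $t-c_0^{-1}|x-y|$) to leading order gives the stated formula with the remaining $O(a^2)$ after the first two terms are extracted. Throughout, the regularity hypothesis $J\in H^6_{0,\sigma}(0,T;L^2(\Omega))$ is what guarantees (via \cite[Theorem 3.2]{L-M2015} and the convolution-quadrature well-posedness quoted before the theorem) that $V$, hence $U$ and $W$, have enough time derivatives in $H^\mu_{0,\sigma}(0,\mathcal{T};L^2(D))$ for $U_{tt}$ to make sense and for the Taylor/retardation expansions to be justified with controlled remainders, and it is what makes the pointwise-in-space-and-time claim legitimate (Sobolev embedding in time plus interior elliptic regularity in space away from $D$).

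The main obstacle I anticipate is making the error term genuinely $O(a^2)$ uniformly, rather than merely formally: one must show that (i) the Neumann series in the rescaled variable converges with the right powers of $a$ despite the ``resonant'' $O(1)$ size of $\lambda_n/c_1^2$ — this requires a careful spectral bound on $(I+\frac{\chi_1}{c_0^2}V_D[\partial_t^2\,\cdot])^{-1}$ on $D$ that is uniform in $a$, presumably obtained by a Laplace-transform/Lubich convolution-quadrature estimate so that the resolvent is controlled on a half-plane $\mathrm{Re}\,s>\sigma$; and (ii) the retardation corrections inside $D$, of size $a\,|\partial_t|$ relative to the leading term, and the Taylor remainder of the exterior kernel, also of size $a$, combine to give the next order $a^2$ only after the $O(a)$ pieces are seen to cancel or to be absorbed into the two displayed sums — in particular one needs the mean-value/symmetry properties of the eigenfunctions $e_n$ on $B$ (the first moments $\int_D\nabla\text{-type}$ corrections) to kill the odd-order-in-$a$ contributions. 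Handling the infinite sum over $n$ (tail estimates for $\sum_n \lambda_n^{-1}(\int_D e_n)^2$, which is where $H^6$ rather than $H^2$ is spent) is the other delicate point, and I would treat it by Weyl-type asymptotics for the eigenvalues of $\mathcal{N}$ together with the smoothing provided by the extra time derivatives of $V$.
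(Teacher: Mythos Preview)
The paper does not actually prove this theorem: immediately before the statement it says ``the asymptotic behavior of the field $U(x,t)$ for the wave equation model as $a\to 0$ is given in the following theorem; see \cite{S-W2022} for details,'' and no argument is supplied afterward. Theorem~\ref{the:W_expansion} is quoted from the earlier Sini--Wang paper \cite{S-W2022}, so there is no in-text proof here to compare your proposal against.

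That said, your sketch is a faithful reconstruction of how such an expansion is obtained, and the formal computation checks: starting from \eqref{eq:Lippmann-Schwinger_equation 2}, approximating $V_D$ on $D$ by the Newtonian operator $\mathcal{N}$ (retardation inside $D$ being $O(a)$), projecting onto $\{e_n\}$ gives the modal ODE $\ddot w_n+\omega_n^2 w_n=-\ddot v_n$ with $\omega_n=c_1/\sqrt{\lambda_n}$; solving with zero Cauchy data and two integrations by parts yields $w_n(t)=-v_n(t)+\omega_n\int_0^t\sin(\omega_n(t-s))v_n(s)\,ds$. Then, since for $x\notin K$ one has $\ddot u_n=-\omega_n^2 w_n$ and $-\chi_1 c_0^{-2}\,\ddot u_n\approx \lambda_n^{-1}w_n$, the exterior representation $W(x,t)=-\chi_1 c_0^{-2}V_D[U_{tt}](x,t)$ with the far-field approximation $V_D[e_n](x,\cdot)\approx (4\pi|x-z|)^{-1}\int_D e_n\,dy$ (retarded by $c_0^{-1}|x-z|$) reproduces exactly the two sums in \eqref{eq:W_expansion for inf}. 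Your identification of where the $H^6$ regularity is spent (uniform-in-$a$ resolvent bounds via Laplace transform/convolution quadrature, Taylor remainders in the retardation, and tail control of the eigenseries) is also on target; these are precisely the technical ingredients that \cite{S-W2022} supplies to upgrade the formal derivation to a rigorous $O(a^2)$ remainder.
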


\begin{remark}
\label{rem:scale}
From the expansion of the field $W$, the relevant eigenfunctions $e_n$'s are those with nonvanishing averages, i.e., $\int_D e_n(y) dy \neq 0$. From Appendix, we see that
\begin{equation}
\lambda_n \sim a^2, \quad \int_D e_n(y) dy \sim a^{3/2}
\end{equation}
for any fixed $n\in \mathbb{N}^+$. By straightforward calculations, we can easily justify that
\begin{eqnarray}
&& \frac{1}{4\pi |x-z| \lambda_n} \Big(\int_D e_n(y) dy \Big)^2 V(z,\,t-c_0^{-1}|x-z|) = O(a), \nonumber \\
&& \frac{c_1 \lambda_n^{-\frac{3}{2}}}{4\pi |x-z|} \Big(\int_D e_n(y) dy \Big)^2 \int_{c_0^{-1} |x-z|}^t \sin \Big[\frac{c_1}{\sqrt{\lambda_n}}(t-s) \Big] V(z,\,s-c_0^{-1}|x-z|) ds = O(a) \nonumber
\end{eqnarray}
as $a\ll 1$. Hence, the first two terms are dominant in the expansion of $W(x,\,t)$, comparing with the error $O(a^2)$. 
\end{remark}

Next, we give the convergence of the series in \eqref{eq:W_expansion for inf}. In the practical calculation, we only consider the first $N$ terms in the expansion and approximate $W(x,\,t)$ for $(x,\,t)\in (\mathbb{R}^3\setminus K)\times (0,\,\mathcal{T})$ by
\begin{equation}
\begin{aligned}
W_N(x,\,t) := & -\sum_{n=1}^{N} \frac{1}{4\pi |x-z| \lambda_n} \Big(\int_D e_n(y) dy \Big)^2 V(z,\,t-c_0^{-1}|x-z|) \\
& +\sum_{n=1}^{N} \frac{c_1 \lambda_n^{-\frac{3}{2}}}{4\pi |x-z|} \Big(\int_D e_n(y) dy \Big)^2 \int_{c_0^{-1} |x-z|}^t \sin \Big[\frac{c_1}{\sqrt{\lambda_n}}(t-s) \Big] V(z,\,s-c_0^{-1}|x-z|) ds.
\end{aligned}
\label{eq:W_N 1}
\end{equation}
We then provide an error estimate associated with this truncation.

\begin{theorem}
\label{the:convergence of W}
Let $D$ be a spherical droplet. Assuming that $J$ is compactly supported in $\Omega \times [0,\,T]$ and $J \in H^p_{0,\,\sigma}(0,\,T;\,L^2(\Omega))$ with $p\geq 6,\, p\in \mathbb{N}$, the expansion \eqref{eq:W_expansion for inf} is convergent. The truncation error for \eqref{eq:W_N 1} satisfies
\begin{equation}
|W(x,\,t) - W_N(x,\,t)| \lesssim \frac{a}{N},
\label{eq:W_N error}
\end{equation}
where we use the notation \lq\lq $\lesssim$ \rq\rq to denote \lq\lq $\leq$ \rq\rq with its right-hand side multiplied by a generic positive constant.
Furthermore, if $t>T_J+ c_0^{-1}|x-z|$ with $T_J :=T+ c_0^{-1} \mathrm{max}_{y\in \Omega}|z-y|$, then we have
\begin{equation}
|W(x,\,t) - W_N(x,\,t)| \lesssim \frac{a}{N^{p-1}}.
\label{eq:W_N error for the large t}
\end{equation}
Note that, the estimates \eqref{eq:W_N error} and \eqref{eq:W_N error for the large t} hold pointwise in both time and space.
\end{theorem}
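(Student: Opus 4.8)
The plan is to write $W-W_N$ as the sum of the tail $\sum_{n>N}$ of the series in \eqref{eq:W_expansion for inf} and the $O(a^2)$ remainder supplied by Theorem \ref{the:W_expansion}, and to estimate that tail term by term, using the explicit spectral data of the Newtonian operator on a ball together with an oscillatory-integral bound in the time variable. Fix $(x,t)$ with $x\in\mathbb{R}^3\setminus K$; since $D=z+aB\subset\subset K$, the distance $|x-z|$ is bounded below by a constant depending only on $K$, and by \eqref{eq:volume potential J} the map $s\mapsto V(z,s-c_0^{-1}|x-z|)$ is bounded, with norm independent of $x$.

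First I would record the explicit spectral data for the spherical case (obtained by separation of variables and the spherical-Bessel ansatz, consistent with the analysis of $\mathcal{N}$ in the Appendix): the only eigenfunctions of $\mathcal{N}$ with nonzero average are the radial ones, and for the ball $D=z+aB$ of radius $ar_B$ one has $\sqrt{\lambda_n}=\frac{ar_B}{(n-\tfrac12)\pi}$ and $\big(\int_D e_n\,dy\big)^2=\frac{8\pi\lambda_n^2}{ar_B}$, obtained by matching the interior eigenfunction $\sin(\sqrt{\mu_n}\,|\cdot-z|)/|\cdot-z|$, $\mu_n=\lambda_n^{-1}$, to the exterior harmonic function proportional to $|x-z|^{-1}$. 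Inserting these into \eqref{eq:W_N 1}, the $n$-th coefficient of the first sum equals $\frac{2ar_B}{\pi^2(n-\tfrac12)^2|x-z|}$ and that of the second sum equals $\frac{2c_1}{\pi(n-\tfrac12)|x-z|}$ with attached frequency $\omega_n:=c_1/\sqrt{\lambda_n}=c_1\pi(n-\tfrac12)/(ar_B)$; since $c_1\sim a$, these are $O(a/n^2)$, $O(a/n)$ and $\omega_n\asymp n$, in agreement with Remark \ref{rem:scale}. Because $\sum_{n>N}(n-\tfrac12)^{-2}\lesssim 1/N$, the tail of the first sum is already $\lesssim a/N$.

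For the second sum I need an oscillatory-integral estimate for $I_n(t):=\int_{c_0^{-1}|x-z|}^{t}\sin[\omega_n(t-s)]\,V(z,s-c_0^{-1}|x-z|)\,ds$. Put $\tau_0:=c_0^{-1}|x-z|$ and $\rho(s):=V(z,s-\tau_0)$. From \eqref{eq:volume potential J}, the hypothesis $J\in H^p_{0,\sigma}(0,T;L^2(\Omega))$, and the mapping properties of the retarded volume potential, $\rho$ is supported in $[\tau_0,\,T_J+\tau_0]$, belongs to $H^{p-1}$ in time (one derivative being lost when the three-dimensional retarded potential is evaluated at the fixed interior point $z$), and $\rho,\rho',\dots,\rho^{(p-2)}$ are continuous and vanish at both endpoints of that interval — by causality at the left endpoint, and by the compact support of $J$ in $[0,T]$ at the right endpoint. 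Integrating by parts once, $|I_n(t)|\lesssim\omega_n^{-1}\big(\|\rho\|_{L^\infty}+\|\rho'\|_{L^2}\big)\lesssim 1/n$ for every $t$, so the $n$-th term of the second sum is $\lesssim a/n^2$ and its tail is $\lesssim a/N$; together with the first sum and the $O(a^2)$ of Theorem \ref{the:W_expansion} (of the same order or smaller in the regime $N\lesssim a^{-1}$) this proves \eqref{eq:W_N error}, and the same termwise $\ell^1$ bounds give the absolute convergence of \eqref{eq:W_expansion for inf}. For $t>T_J+\tau_0$ two improvements occur: the first sum vanishes identically since its common factor $V(z,t-\tau_0)$ is zero, and in $I_n(t)$ the integral may be taken over the whole support $[\tau_0,T_J+\tau_0]$ of $\rho$, so that $p-1$ integrations by parts leave no boundary contributions and $|I_n(t)|\lesssim\omega_n^{-(p-1)}\|\rho^{(p-1)}\|_{L^1}\lesssim 1/n^{p-1}$; the tail of the second sum is then $\lesssim\sum_{n>N}a/n^{p}\lesssim a/N^{p-1}$, which yields \eqref{eq:W_N error for the large t}.

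The step I expect to be the main obstacle is the oscillatory-integral estimate and, inside it, the regularity bookkeeping for $\rho$: one has to justify that pointwise evaluation of the retarded volume potential at $z$ followed by $p-1$ time differentiations stays bounded (this is where the hypothesis $p\ge 6$ enters, to retain enough smoothness after the evaluation), and — decisively for the sharper rate — that $\rho$ and its first $p-2$ derivatives vanish at the upper endpoint $T_J+\tau_0$, since it is exactly the disappearance of these boundary terms that upgrades $a/N$ to $a/N^{p-1}$ in the large-time regime. The remaining ingredients, namely summing the $p$-series tails and collecting the uniform constants (which depend only on $\mathrm{dist}(z,\partial K)$, on $B$, and on norms of $J$, not on $a$, $N$, $x$ or $t$), are routine.
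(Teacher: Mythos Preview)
Your proposal is correct and follows essentially the same route as the paper: explicit spherical spectral data giving $\lambda_n\sim a^2/n^2$ and $\big(\int_D e_n\big)^2\sim a^3/n^4$, termwise bounds $|\xi_n|+|\zeta_n|\lesssim a/n^2$ via an oscillatory-integral estimate $|I_n|\lesssim 1/n$, tail summation, and the large-$t$ improvement coming from repeated integration by parts once the boundary contributions vanish. The only substantive difference is in the regularity bookkeeping: the paper does not use your ``one derivative lost'' heuristic for pointwise evaluation of the retarded potential, but instead invokes the elliptic identity $\Delta V=c_0^{-2}V_{tt}-J$ to place $V\in H^{p-2}_{0,\sigma}\big(0,T_J;H^2(\Omega)\big)$ and then applies the Sobolev embedding $H^2(\Omega)\hookrightarrow C(\overline{\Omega})$ to justify the pointwise bounds on $\partial_t^k V(z,\cdot)$.
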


\begin{proof}
We consider only the eigenelements with nonvanishing averages.
The normalized eigenfunctions and the corresponding eigenvalues satisfy
\begin{equation}
e_n= \frac{u_{0n0}}{ \sqrt {\langle u_{0n0},\,u_{0n0} \rangle }}, \quad  \lambda_n = \frac{a^2}{\mu_n^2}, \quad n\in \mathbb{N}^+,
\label{eq:e_n and l_n}
\end{equation}
where $\mu_n = (n-\frac{1}{2} )\pi$. For the definition of $u_{0n0}$, refer to \eqref{eq:eigenfunction_nonvanish} and the details in Appendix.

By straightforward calculations, we obtain
\begin{equation}
\Big| \int_D u_{0n0}(y) dy \Big| =4\sqrt{2\pi} \Big(\frac{a}{\mu_n} \Big)^{5/2}, \quad \langle u_{0n0},\,u_{0n0} \rangle  =\frac{8 a^2}{\mu_n} \Big[ \frac{1}{2}-\frac{\sin(2\mu_n)}{4\mu_n} \Big],
\label{eq:int_D for u}
\end{equation}
and
\begin{equation}
\Big|\int_D e_n(y) dy \Big| \lesssim \frac{a^{3/2}}{n^2}, \quad n\in \mathbb{N}^+.
\label{eq:estimate of int_en}
\end{equation}
Denote
\begin{equation}
    \xi_n(x,\,t):= -\frac{1}{4\pi |x-z| \lambda_n} \Big(\int_D e_n(y) dy \Big)^2 V(z,\,t-c_0^{-1}|x-z|), \quad n\in \mathbb{N}^+.
    \label{eq:xi_n}
\end{equation}
From \eqref{eq:e_n and l_n} and \eqref{eq:estimate of int_en}, we deduce
\begin{equation}
    |\xi_n(x,\,t)|  \lesssim \frac{a}{n^2}.
\end{equation}

Next, we estimate 
\begin{equation}
\zeta_n(x,\,t) :=\frac{c_1 \lambda_n^{-\frac{3}{2}}}{4\pi |x-z|} \Big(\int_D e_n(y) dy \Big)^2 \int_{c_0^{-1} |x-z|}^t \sin \Big[\frac{c_1}{\sqrt{\lambda_n}}(t-s) \Big] V(z,\,s-c_0^{-1}|x-z|) ds, \quad n\in \mathbb{N}^+. 
\label{eq:zeta_n}
\end{equation}
Since $J \in H^p_{0,\,\sigma}(0,\,T;\,L^2(\Omega))$, it follows that $V \in H_{0,\,\sigma}^p \big(0,\,T_J;\,H^1(\Omega) \big)$; see \cite[Theorem 3.2]{L-M2015} for details. Moreover, owing to $\Delta V=J - c_0^{-1}V_{tt}$, we see that $V\in  H_{0,\,\sigma}^{p-2} \big(0,\,T_J;\,H^2(\Omega) \big),\ p\geq 6$. Applying the Sobolev embedding theorem in space yields
\begin{equation}
V(\cdot,\,t) \in C^{0,\,1/2} (\overline{\Omega}) \subset C(\overline{\Omega}), \quad t \in [0,\,T_J],
\label{eq:boundness of V pointwise}
\end{equation}
which holds pointwise in time. Applying the Sobolev embedding theorem in time, we have $V(z,\,\cdot) \in C([0,\,T_J])$ as $V \in H_{0,\,\sigma}^p \big(0,\,T_J;\,H^1(\Omega) \big),\ p\geq 6$. Consequently, $|V(z,\,\cdot)|$ is bounded for $t\in [0,\,T_J]$. Since $c_1 \sim a$, we observe from \eqref{eq:e_n and l_n} that $\frac{c_1}{\sqrt{\lambda_n}} \sim n$. Then, we obtain
\begin{equation}
    \Big| \int_{c_0^{-1} |x-z|}^t \sin \Big[\frac{c_1}{\sqrt{\lambda_n}}(t-s) \Big] V(z,\,s-c_0^{-1}|x-z|) ds \Big|  \lesssim 
    \Big| \int_{c_0^{-1} |x-z|}^{T_J+ c_0^{-1}|x-z|} \sin \Big[\frac{c_1}{\sqrt{\lambda_n}}(t-s) \Big] ds \Big| \lesssim \frac{1}{n}.
\label{eq:integral of sin_V}
\end{equation}
Combining \eqref{eq:e_n and l_n}, \eqref{eq:estimate of int_en}, and \eqref{eq:integral of sin_V}  yields
\begin{equation}
\label{eq:zeta_n_es}
|\zeta_n(x,\,t)|  \lesssim \frac{a}{n^2}.
\end{equation}
As the series $\{ \frac{1}{n^2} \}_{n\in \mathbb{N}^+}$ is convergent, the expansion \eqref{eq:W_expansion for inf} is convergent, with the truncation error given by \eqref{eq:W_N error}.

Furthermore, for $t>T_J+ c_0^{-1}|x-z|$, we have
\begin{equation}
\partial_t^k V(z,\,t-c_0^{-1}|x-z|)=0,  \quad k=0,\,1,\,\ldots,\,p,
\label{eq:partial tk V}
\end{equation}
and the term $\xi_n$ vanishes. From \eqref{eq:partial tk V}, by using integration by parts, we obtain
\begin{equation*}
\begin{aligned}
& \int_{c_0^{-1} |x-z|}^t \sin \Big[\frac{c_1}{\sqrt{\lambda_n}}(t-s) \Big] V(z,\,s-c_0^{-1}|x-z|) ds \\
=& \frac{\sqrt{\lambda_n}}{c_1} \Big( \cos \Big[\frac{c_1}{\sqrt{\lambda_n}}(t-s) \Big] V(z,\,s-c_0^{-1}|x-z|) \Big|_{c_0^{-1} |x-z|}^t  \\
&  \quad\quad\quad - \int_{c_0^{-1} |x-z|}^t \cos \Big[\frac{c_1}{\sqrt{\lambda_n}}(t-s) \Big] \partial_t V(z,\,s-c_0^{-1}|x-z|) ds \Big)  \\
=& \Big(-\frac{\sqrt{\lambda_n}}{c_1} \Big) \int_{c_0^{-1} |x-z|}^t \cos \Big[\frac{c_1}{\sqrt{\lambda_n}}(t-s) \Big] \partial_t V(z,\,s-c_0^{-1}|x-z|) ds \\
=& \Big(-\frac{\sqrt{\lambda_n}}{c_1} \Big) \int_{c_0^{-1} |x-z|}^{T_J+ c_0^{-1}|x-z|} \cos \Big[\frac{c_1}{\sqrt{\lambda_n}}(t-s) \Big] \partial_t V(z,\,s-c_0^{-1}|x-z|) ds.
\end{aligned}
\end{equation*}
By the similar argument in deriving \eqref{eq:zeta_n_es}, we have
\begin{equation*}
\partial_t^k V(\cdot,\,t) \in C^{0,\,1/2} (\overline{\Omega}) \subset C(\overline{\Omega}), \quad t \in [0,\,T_J], \quad k=0,\,1,\,\ldots,\,p-2, 
\end{equation*}
and 
\begin{equation*}
\partial_t^k V(z,\,\cdot) \in C([0,\,T_J]), \quad z\in \Omega, \quad k=0,\,1,\,\ldots,\,p-2.  
\end{equation*}
Then, it follows that
\begin{equation}
\Big| \int_{c_0^{-1} |x-z|}^t \sin \Big[\frac{c_1}{\sqrt{\lambda_n}}(t-s) \Big] V(z,\,s-c_0^{-1}|x-z|) ds \Big| \lesssim \frac{1}{n^{p-1}}.
\label{eq:estimate of integral for large t}
\end{equation}
Combining \eqref{eq:e_n and l_n}, \eqref{eq:estimate of int_en}, and \eqref{eq:estimate of integral for large t} yields
\begin{equation}
|\zeta_n(x,\,t)|  \lesssim \frac{a}{n^{p}},
\end{equation}
thus improving the estimate \eqref{eq:W_N error} to \eqref{eq:W_N error for the large t}.
\end{proof}

\subsection{Inversion formulas using the droplet-induced asymptotics}

From Theorem~\ref{the:W_expansion}, the dominant term in the asymptotic expansion of $W$ is expressed as an infinite series in terms of the eigensystem of the Newtonian operator. Due to the compactness of the support of $J$, we can derive that the field $V(z,\,t)$ is compactly supported with respect to $t$ in $[0,\,T_J]$. Then, for $t>T_J+c_0^{-1}|x-z|$, the integrals in the expansion of $W$ are independent of $t$ and we have $W(x,\,t)=U(x,\,t),\,x\in\mathbb R^3\setminus\Omega$. Meanwhile, the family $\{ \sin(\omega_n t),\, \cos(\omega_n t) \}_{n\in \mathbb{N}^+}$ with $\omega_n:=\frac{c_1}{\sqrt{\lambda_n}}$ forms a Riesz basis. By using the Riesz theory and shifting the time interval, the wave field $V(z,\,t)$ is reconstructed from the measurement of $U$, as stated in the following theorem from \cite{S-W2022}. Note that, due to the simplified eigensystem presented in Appendix, the resulting expression is more concise than that in \cite{S-W2022}.

\begin{theorem}
\label{the:inverse sourece}
Let $D$ be a spherical droplet. Assume that $J$ is compactly supported in $\Omega \times [0,\,T]$ and $J \in H^6_{0,\,\sigma}(0,\,T;\,L^2(\Omega))$. For $x \in \mathbb{R}^3 \setminus \Omega$, we measure $U(x,\,t)$ for an interval of the form $[\tilde{T},\,\tilde{T}+2\pi/b]$ with $\tilde{T}> T_J +c_0^{-1}|x-z|$ and $b:=c_1\pi/a$. Then 
\begin{equation}
V(z,\,t)=\sum_{n=1}^{+\infty} \frac{b}{\pi}[ C_n(x,\,z)g_n(t-\pi/b) + D_n(x,\,z) h_n(t-\pi/b)], \quad t\in [0,\,2\pi/b],
\label{eq:V_rec_expansion}
\end{equation}
with 
\begin{eqnarray}
&& C_n(x,\,z):= \int_0^{2\pi/b} \cos(\omega_n (s-\pi/b)) V(z,\,s)ds, \quad n\in \mathbb{N}^+, \label{eq:C_n from V} \\
&& D_n(x,\,z):= \int_0^{2\pi/b} \sin(\omega_n (s-\pi/b)) V(z,\,s)ds, \quad n\in \mathbb{N}^+, \label{eq:D_n from V}
\end{eqnarray}
where $g_n(t):=(\mathcal{F}^{-1})^\star \cos((n-1)t)=\cos(\omega_n t)$ and $h_n(t):=(\mathcal{F}^{-1})^\star \sin(nt)=\sin(\omega_n t)$. The operator $\mathcal{F}$ is the one linking the Riesz basis $\{ \sin(\omega_n t),\, \cos(\omega_n t) \}_{n\in \mathbb{N}^+}$ to the canonical basis $\{ \sin(n t),\, \cos((n-1) t) \}_{n\in \mathbb{N}^+}$, with $\omega_n=b(n-\frac{1}{2})$. The convergence of \eqref{eq:V_rec_expansion} holds pointwise in both time and space. The coefficients $C_n(x,\,z)$ and $D_n(x,\,z)$ can be estimated from the data $U(x,t)$ as follows:
\begin{eqnarray}
&& C_n(x,\,z)=\cos \big(\omega_n (\tilde{T}-c_0^{-1}|x-z|) \big)A_n(x,\,z) - \sin\big( \omega_n (\tilde{T}-c_0^{-1}|x-z|) \big) B_n(x,\,z), \label{eq:C_n for reconstruction} \\
&& D_n(x,\,z)=\sin \big(\omega_n (\tilde{T}-c_0^{-1}|x-z|) \big)A_n(x,\,z) + \cos\big(\omega_n (\tilde{T}-c_0^{-1}|x-z|) \big) B_n(x,\,z), \label{eq:D_n for reconstruction}
\end{eqnarray}
where 
\begin{eqnarray}
&& A_n(x,\,z)=\frac{b}{\pi\alpha_n} \int_0^{2\pi/b} h_n(s-\pi/b) U(x,\,\tilde{T}+s) ds + O(na), \quad n\in \mathbb{N}^+, \label{eq:A_n for reconstruction} \\
&& B_n(x,\,z)=-\frac{b}{\pi\alpha_n} \int_0^{2\pi/b} g_n(s-\pi/b) U(x,\,\tilde{T}+s) ds + O(na), \quad n\in \mathbb{N}^+, \label{eq:B_n for reconstruction}
\end{eqnarray}
with
\begin{equation}
\alpha_n:=\alpha_n(x,\,z):=\frac{\omega_n (\int_D e_n(y) dy)^2}{4\pi |x-z|\lambda_n}.
\label{eq:alpha_n}
\end{equation}
\end{theorem}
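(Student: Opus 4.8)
The plan is to recover $V(z,\cdot)$ in three stages: (i) exhibit it as a generalized Fourier series in $\{\cos(\omega_n\cdot),\sin(\omega_n\cdot)\}_{n\in\mathbb{N}^+}$; (ii) show that on a late-time window the single datum $U(x,\cdot)$ is, up to $O(a^2)$, a sum of pure sinusoids of these same frequencies whose quadrature amplitudes are signed copies of the Fourier coefficients of $V(z,\cdot)$; (iii) invert a $2\times2$ rotation to read those coefficients off the data. For (i): for the spherical droplet, \eqref{eq:e_n and l_n} and $b=c_1\pi/a$ give $\omega_n=c_1/\sqrt{\lambda_n}=b(n-1/2)$, so $\{e^{\pm i\omega_n t}\}_{n\in\mathbb{N}^+}=\{e^{ib(m-1/2)t}\}_{m\in\mathbb{Z}}=e^{-ibt/2}\{e^{ibmt}\}_{m\in\mathbb{Z}}$ is the image of the canonical exponential basis on an interval of length $2\pi/b$ under a unimodular multiplier, hence orthonormal; taking real and imaginary parts on $(-\pi/b,\pi/b)$, using parity, and noting $\|\cos(\omega_n\cdot)\|^2=\|\sin(\omega_n\cdot)\|^2=\pi/b$ (because $\sin(2\omega_n\pi/b)=\sin((2n-1)\pi)=0$), one gets that $\{\sqrt{b/\pi}\,\cos(\omega_n\cdot),\,\sqrt{b/\pi}\,\sin(\omega_n\cdot)\}_{n\in\mathbb{N}^+}$ is an orthonormal basis of $L^2(-\pi/b,\pi/b)$ (in particular a Riesz basis, which makes the operator $\mathcal{F}$ of the statement explicit). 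Since $V(z,\cdot)$ is supported in $[0,T_J]$, which we take $\subseteq[0,2\pi/b]$ (the window $2\pi/b=2a/c_1$ is of order one and is made large enough by choosing the droplet soft enough), and --- by the argument in the proof of Theorem~\ref{the:convergence of W} --- lies in $C^4([0,T_J])$ with $V(z,\cdot)$ and all its time derivatives up to order $4$ vanishing at $t=0$ (causality of $J$) and at $t=T_J$ (where $T_J-c_0^{-1}|z-y|\geq T$ annihilates $J(y,\cdot)$), its zero-extension is a $C^4$ periodic function; expanding it in the above basis after the shift $t\mapsto t-\pi/b$ yields exactly \eqref{eq:V_rec_expansion}--\eqref{eq:D_n from V}, the Fourier coefficients are $O(n^{-2})$, and the series converges absolutely and uniformly, hence pointwise.

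For (ii), fix $x\in\mathbb{R}^3\setminus\Omega$ and the window $[\tilde T,\tilde T+2\pi/b]$ with $\tilde T>T_J+c_0^{-1}|x-z|$. By the triangle inequality $|x-y|\leq|x-z|+|z-y|$ for $y\in\Omega$, the retarded potential $V(x,\cdot)$ vanishes for $t>T_J+c_0^{-1}|x-z|$, so $U(x,t)=W(x,t)$ on the window. Insert the expansion \eqref{eq:W_expansion for inf} of Theorem~\ref{the:W_expansion} (uniformly convergent by Theorem~\ref{the:convergence of W}, so the term-by-term operations below are legitimate): for $t>T_J+c_0^{-1}|x-z|$ the first family of terms vanishes, while in each term of the second family the integrand $V(z,s-c_0^{-1}|x-z|)$ is supported in $s\leq T_J+c_0^{-1}|x-z|$, so its upper limit may be frozen there. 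Substituting $\tau=s-c_0^{-1}|x-z|$, expanding $\sin[\omega_n(t-c_0^{-1}|x-z|-\tau)]$ by the addition formula, and using $c_1\lambda_n^{-3/2}=\omega_n\lambda_n^{-1}$, the $n$-th term of $W$ becomes $\alpha_n$ (as in \eqref{eq:alpha_n}) times the $\omega_n$-sinusoid $\sin[\omega_n(t-c_0^{-1}|x-z|)]\,\mathsf C_n-\cos[\omega_n(t-c_0^{-1}|x-z|)]\,\mathsf S_n$, where $\mathsf C_n:=\int_0^{T_J}\cos(\omega_n\tau)V(z,\tau)\,d\tau$ and $\mathsf S_n:=\int_0^{T_J}\sin(\omega_n\tau)V(z,\tau)\,d\tau$ are the cosine/sine moments of $V(z,\cdot)$. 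Because $\omega_n\pi/b=(n-1/2)\pi$, evaluating the half-period shift in \eqref{eq:C_n from V}--\eqref{eq:D_n from V} gives $\mathsf S_n=(-1)^{n-1}C_n$ and $\mathsf C_n=(-1)^nD_n$. Thus $U(x,\tilde T+\cdot)$ equals the sum over $n\in\mathbb{N}^+$ of these sinusoids plus a pointwise $O(a^2)$.

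For (iii), project $U(x,\tilde T+\cdot)$ onto the orthonormal functions $\sqrt{b/\pi}\,g_n(\cdot-\pi/b)$ and $\sqrt{b/\pi}\,h_n(\cdot-\pi/b)$ on $[0,2\pi/b]$ from (i): orthogonality keeps only the $n$-th term, the cross-terms drop, and the $O(a^2)$ remainder of \eqref{eq:W_expansion for inf} integrates to a scalar $O(a^2)$. Using the exact value $\alpha_n=2c_1/(\pi(n-1/2)|x-z|)$ --- which follows from \eqref{eq:e_n and l_n}, \eqref{eq:int_D for u} and \eqref{eq:alpha_n} (using $\sin(2\mu_n)=0$) and gives $b/(\pi\alpha_n)=\pi(n-1/2)|x-z|/(2a)\sim n/a$ --- one finds that the normalized projections $A_n,B_n$ of \eqref{eq:A_n for reconstruction}--\eqref{eq:B_n for reconstruction} equal a rotation of $(\mathsf C_n,\mathsf S_n)$ by the angle $\omega_n(\tilde T+\pi/b-c_0^{-1}|x-z|)$, the scalar $O(a^2)$ error having been amplified by $b/(\pi\alpha_n)$ into the $O(na)$ displayed there. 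Solving this rotation for $(\mathsf C_n,\mathsf S_n)$ amounts to transposing the $2\times2$ matrix; decomposing the phase $\omega_n(\tilde T+\pi/b-c_0^{-1}|x-z|)=\omega_n(\tilde T-c_0^{-1}|x-z|)+(n-1/2)\pi$ splits that transpose into the rotation by $\omega_n(\tilde T-c_0^{-1}|x-z|)$ appearing in \eqref{eq:C_n for reconstruction}--\eqref{eq:D_n for reconstruction} followed by the signed coordinate swap coming from the $(n-1/2)\pi$ phase, and matching that swap against the identities $\mathsf S_n=(-1)^{n-1}C_n$, $\mathsf C_n=(-1)^nD_n$ of (ii) shows that \eqref{eq:C_n for reconstruction}--\eqref{eq:D_n for reconstruction} return the true Fourier coefficients $C_n,D_n$ up to $O(na)$. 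Substituting into \eqref{eq:V_rec_expansion} reconstructs $V(z,\cdot)$ on $[0,2\pi/b]$.

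The conceptual skeleton --- an orthonormal basis, one-frequency-at-a-time projection, inversion of a $2\times2$ rotation --- is short; the main obstacle is the trigonometric bookkeeping in (ii)--(iii): checking that the phase introduced by the measurement offset $\tilde T-c_0^{-1}|x-z|$, combined with the half-period shift $\pi/b$ through $\omega_n\pi/b=(n-1/2)\pi$, collapses to exactly the signed swap relating the cosine/sine moments $(\mathsf C_n,\mathsf S_n)$ of $V(z,\cdot)$ on $[0,T_J]$ to its Fourier coefficients $(C_n,D_n)$ on $[-\pi/b,\pi/b]$, so that the two rotations in the chain telescope. The only genuinely quantitative ingredient is the sharp scaling $\alpha_n\sim a/n$ from the explicit spherical eigensystem of the Appendix, which converts the $O(a^2)$ modelling error of Theorem~\ref{the:W_expansion} into the stated $O(na)$ after normalization; a minor additional point is upgrading the $L^2$ convergence of the series in (i) to the pointwise convergence claimed, which rests on the vanishing of $V(z,\cdot)$ and its derivatives at the endpoints of $[0,T_J]$.
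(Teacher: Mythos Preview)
Your proof is correct and, in fact, more detailed than what the paper itself provides: the paper does not prove this theorem but cites \cite{S-W2022}, noting only that the explicit spherical eigensystem of the Appendix simplifies the formulas. Your route is essentially the one underlying that reference --- late-time reduction of the expansion \eqref{eq:W_expansion for inf} to pure sinusoids at frequencies $\omega_n$, projection onto the basis, and inversion of a $2\times2$ rotation --- and the trigonometric bookkeeping you carry out (the identities $\mathsf S_n=(-1)^{n-1}C_n$, $\mathsf C_n=(-1)^nD_n$ via $\omega_n\pi/b=(n-1/2)\pi$, and the exact value $\alpha_n=2c_1/(\pi(n-1/2)|x-z|)$) checks out and yields exactly \eqref{eq:C_n for reconstruction}--\eqref{eq:B_n for reconstruction}.

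One point worth highlighting: your step~(i) goes slightly beyond the paper's formulation by observing that $\{e^{\pm i\omega_n t}\}_{n\in\mathbb N^+}=e^{-ibt/2}\{e^{ibmt}\}_{m\in\mathbb Z}$, so the system $\{\cos(\omega_n\cdot),\sin(\omega_n\cdot)\}$ is in fact \emph{orthonormal} (after normalization by $\sqrt{b/\pi}$) on an interval of length $2\pi/b$, not merely a Riesz basis. This makes the operator $\mathcal F$ in the statement unitary and its adjoint-inverse trivial, which is why your projections in~(iii) reduce to ordinary Fourier coefficients with no cross-terms. The paper's phrasing via a general Riesz-basis operator $\mathcal F$ is inherited from \cite{S-W2022}, where the droplet is not assumed spherical and the $\omega_n$ need not be equispaced; for the spherical case treated here your orthogonality shortcut is both legitimate and cleaner. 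The only mild caveat is that the series \eqref{eq:V_rec_expansion} as an identity requires $T_J\le 2\pi/b$ (so that $V(z,\cdot)$ fits in one period), which you correctly flag but which the theorem statement leaves implicit; this is precisely the design constraint discussed in Remark~\ref{rem:b and c1}.
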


\begin{remark}
\label{rem:b and c1}
From \eqref{eq:V_rec_expansion}, if $T\leq 2\pi/b$, the field $V$ can be reconstructed over the time interval $[0,\,T]$. Moreover, the parameter $b$ is given by $b=c_1\pi/a=\frac{\pi}{a}(A_1 B_0)^{-1/2}$, where $A_1$ depends on the material properties of the droplet $D$. This provides a guidance for the design of the droplet in the source reconstruction.
\end{remark}

\subsection{Stability estimates of the inversion formulas}
\label{subsec:Stability estimates of the inversion formulas}
Based on Theorem~\ref{the:inverse sourece}, we analyze the stability of the reconstruction of the field $V$ for noisy measurement data. To minimize the reconstruction error, the choice of the truncation number depends on both the noise and the size of the droplet.

\begin{theorem}
\label{the:Stability estimates of the inversion formulas}
Under the assumptions in Theorem~\ref{the:convergence of W}, we consider the noisy measurement data
\begin{equation*}
|U^{\delta}(x,\,t) - U(x,\,t)| \leq \delta, \quad x \in \mathbb{R}^3 \setminus \Omega,\,t\in [\tilde{T},\,\tilde{T}+2\pi/b],
\end{equation*}
where $\delta >0$ represents the level of the measurement noise. Define 
\begin{eqnarray*}
&& A_n^\prime(x,\,z):=\frac{b}{\pi\alpha_n} \int_0^{2\pi/b} h_n(s-\pi/b) U^\delta(x,\,\tilde{T}+s) ds, \nonumber \\
&& B_n^\prime(x,\,z):=-\frac{b}{\pi\alpha_n} \int_0^{2\pi/b} g_n(s-\pi/b) U^\delta(x,\,\tilde{T}+s) ds,  \nonumber \\
&& C_n^\prime(x,\,z):=\cos \big(\omega_n (\tilde{T}-c_0^{-1}|x-z|) \big)A_n^\prime(x,\,z) - \sin\big( \omega_n (\tilde{T}-c_0^{-1}|x-z|) \big) B_n^\prime(x,\,z), \nonumber \\
&& D_n^\prime(x,\,z):=\sin \big(\omega_n (\tilde{T}-c_0^{-1}|x-z|) \big)A_n^\prime(x,\,z) + \cos\big(\omega_n (\tilde{T}-c_0^{-1}|x-z|) \big) B_n^\prime(x,\,z),
\end{eqnarray*}
and set
\begin{equation}
V_N(z,\,t):=\sum_{n=1}^{N} \frac{b}{\pi}[ C_n^\prime(x,\,z)g_n(t-\pi/b) + D_n^\prime(x,\,z) h_n(t-\pi/b)], \quad t\in [0,\,2\pi/b].
\label{eq:VN_truncation}
\end{equation}
Then, if $T_J \leq 2\pi/b$ or if $V$ is compactly supported in $\Omega \times [0,\,2\pi/b]$, it follows that
\begin{equation}
    | V(x,\,t)-V_N(x,\,t) |  \leq \frac{b}{\pi} \Big[ 2 \tilde{C}_1 N^2 \Big(\frac{\delta}{a}+a \Big)+ \frac{\tilde{C}_2}{N^{p-2}} \Big], \quad p\geq 6,
    \label{eq:V minnus V_N2}
\end{equation}
where $\tilde{C}_1$ and $\tilde{C}_2$ are positive constants independent of $N$.
\end{theorem}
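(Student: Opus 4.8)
The plan is to decompose the reconstruction error $V - V_N$ into two pieces: a truncation (tail) error coming from discarding the modes $n > N$ in the exact Riesz expansion \eqref{eq:V_rec_expansion}, and a perturbation error on the retained modes $n \leq N$ coming from (i) the measurement noise $\delta$ and (ii) the $O(na)$ terms already present in \eqref{eq:A_n for reconstruction}–\eqref{eq:B_n for reconstruction}. Writing $V_N^{\mathrm{exact}}(z,t) := \sum_{n=1}^N \frac{b}{\pi}[C_n g_n(t-\pi/b) + D_n h_n(t-\pi/b)]$ for the truncated exact series, I would use the triangle inequality $|V - V_N| \leq |V - V_N^{\mathrm{exact}}| + |V_N^{\mathrm{exact}} - V_N|$ and estimate the two terms separately, the first producing the $N^{-(p-2)}$ contribution and the second the $N^2(\delta/a + a)$ contribution.

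First I would handle the tail. Since $J \in H^p_{0,\sigma}$ with $p \geq 6$, the same integration-by-parts argument used in the proof of Theorem~\ref{the:convergence of W} (applied to the integrals defining $C_n$, $D_n$ in \eqref{eq:C_n from V}–\eqref{eq:D_n from V}, exploiting that $V(z,\cdot)$ together with its time derivatives up to order $p-2$ lies in $C([0,T_J])$ and that under the hypothesis $T_J \leq 2\pi/b$ these derivatives vanish at the endpoints of the period) gives $|C_n|, |D_n| \lesssim n^{-(p-1)}$. Since $|g_n|, |h_n| \leq 1$ and there is an overall factor $b/\pi$, summing over $n > N$ yields $|V - V_N^{\mathrm{exact}}| \lesssim \frac{b}{\pi} \sum_{n>N} n^{-(p-1)} \lesssim \frac{b}{\pi} N^{-(p-2)}$, which is the $\tilde C_2$ term. (The alternative hypothesis that $V$ is compactly supported in $\Omega\times[0,2\pi/b]$ serves the same role of killing boundary terms in the integration by parts.)

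Next, the perturbed modes. For each fixed $n \leq N$, I would compare $A_n'$ with the exact $A_n$ from \eqref{eq:A_n for reconstruction}: replacing $U$ by $U^\delta$ changes the integral $\frac{b}{\pi\alpha_n}\int_0^{2\pi/b} h_n(s-\pi/b)U(x,\tilde T+s)\,ds$ by at most $\frac{b}{\pi|\alpha_n|}\cdot\frac{2\pi}{b}\cdot\delta = \frac{2\delta}{|\alpha_n|}$, and the already-present error is $O(na)$; by Remark~\ref{rem:scale}, $\alpha_n = \frac{\omega_n(\int_D e_n)^2}{4\pi|x-z|\lambda_n}$ is bounded below by a constant of order $1$ in $a$ (it is precisely the $O(1)$ coefficient multiplying the leading $\sin(\omega_n t)$ term, up to the $n$-dependence $\omega_n/\lambda_n \cdot (\int e_n)^2 \sim n \cdot a^{-2}\cdot a^3 \sim na$, so more carefully $|\alpha_n|^{-1}\lesssim \frac{1}{na}\cdot$something — one checks $\alpha_n \sim 1/n$ so $|\alpha_n|^{-1}\lesssim n$, giving the noise contribution $\lesssim n\delta/a$ after tracking the $a$-scaling of $(\int e_n)^2/\lambda_n$). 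Then $|A_n' - A_n| \lesssim n(\delta/a + a)$, and the same bound holds for $|B_n' - B_n|$; since $C_n', D_n'$ are obtained from $A_n', B_n'$ by an orthogonal rotation (the matrix with entries $\cos,\mp\sin$ of $\omega_n(\tilde T - c_0^{-1}|x-z|)$), we get $|C_n' - C_n|, |D_n' - D_n| \lesssim n(\delta/a + a)$. Multiplying by $|g_n|,|h_n|\leq 1$ and the factor $b/\pi$, and summing $\sum_{n=1}^N n \lesssim N^2$, gives $|V_N^{\mathrm{exact}} - V_N| \lesssim \frac{b}{\pi} N^2 (\delta/a + a)$, the $\tilde C_1$ term. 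Adding the two estimates gives \eqref{eq:V minnus V_N2}.

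The main obstacle is the careful bookkeeping of the $a$- and $n$-dependence of $\alpha_n$ and of the $O(na)$ remainder terms: one must use the explicit formulas \eqref{eq:e_n and l_n}, \eqref{eq:int_D for u}, \eqref{eq:estimate of int_en} for the spherical droplet to confirm that $|\alpha_n|$ stays bounded away from zero uniformly in $n$ (or degrades only polynomially in $n$) and that $|\alpha_n|^{-1}$ combined with the $\delta$ from the noisy integral produces exactly the factor $\delta/a$ rather than a worse power of $a$; getting the power of $N$ (namely $N^2$) right hinges on $|\alpha_n|^{-1}\lesssim n$ and on the $O(na)$ term being linear in $n$. Once this scaling is pinned down, the rest is the routine triangle-inequality assembly and the integration-by-parts tail bound already established in Theorem~\ref{the:convergence of W}.
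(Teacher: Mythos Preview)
Your decomposition and overall strategy match the paper's proof exactly: split $V-V_N$ into a tail term $\sum_{n>N}(C_ng_n+D_nh_n)$ controlled by integration by parts (yielding $N^{-(p-2)}$), and a perturbation term on the modes $n\le N$ driven by the noise $\delta$ and the built-in $O(na)$ remainders in \eqref{eq:A_n for reconstruction}--\eqref{eq:B_n for reconstruction} (yielding $N^2(\delta/a+a)$), and combine via the triangle inequality.

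The only place you stumble is the scaling of $\alpha_n$, where your parenthetical computation wanders between $\alpha_n\sim na$, $\alpha_n\sim 1/n$, and an unspecified ``$a$-scaling'' correction. The clean statement, which the paper records in one line, is $\alpha_n\sim a/n$: indeed $\omega_n\sim n$, $\lambda_n\sim a^2/n^2$, and $(\int_D e_n)^2\sim a^3/n^4$, so $\alpha_n=\frac{\omega_n(\int_D e_n)^2}{4\pi|x-z|\lambda_n}\sim \frac{n\cdot a^3/n^4}{a^2/n^2}=a/n$. With this in hand the noise piece is immediately $\frac{2\delta}{|\alpha_n|}\lesssim n\delta/a$, the remainder is $O(na)$, and the rotation $(A_n,B_n)\mapsto(C_n,D_n)$ preserves the bound, giving $|C_n-C_n'|,|D_n-D_n'|\lesssim n(\delta/a+a)$ as you claim. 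Once you pin down $\alpha_n\sim a/n$ explicitly, the ``main obstacle'' you flag disappears and the rest of your argument is complete.
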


\begin{proof}
Note that $\alpha_n \sim a/n$. From \eqref{eq:A_n for reconstruction} and \eqref{eq:B_n for reconstruction}, there exists a constant $\tilde{C}_1$ such that
\begin{equation}
    \big| C_n(x,\,z)-C_n^\prime(x,\,z) \big| \leq \tilde{C}_1 n (\delta/a+a), \quad \big| D_n(x,\,z)-D_n^\prime(x,\,z) \big| \leq \tilde{C}_1 n (\delta/a+a), \quad n\in \mathbb{N}^+.
    \label{eq:C_n and D_n and C_1 a}
\end{equation}
Since the Riesz basis expansion in \eqref{eq:V_rec_expansion} is convergent, there exists a constant $\tilde{C}_2$ such that 
\begin{equation}
    \Big| \sum_{n=N+1}^{+\infty} \big[ C_n(x,\,z)g_n(t-\pi/b) + D_n(x,\,z) h_n(t-\pi/b) \big] \Big| \leq \tilde{C}_2.
    \label{eq:C_n plus D_n}
\end{equation}
If $T_J \leq 2\pi/b$ or if $V$ is compactly supported in $\Omega \times [0,\,2\pi/b]$, by using the similar argument in deriving \eqref{eq:estimate of integral for large t}, the truncation error \eqref{eq:C_n plus D_n} can be improved by 
\begin{equation*}
    \Big| \sum_{n=N+1}^{+\infty} \big[ C_n(x,\,z)g_n(t-\pi/b) + D_n(x,\,z) h_n(t-\pi/b) \big] \Big| \leq \frac{\tilde{C}_2}{N^{p-2}}, \quad p\geq 6.
\end{equation*}
By straightforward calculations, we have
\begin{equation*}
\begin{aligned}
| V(x,\,t)-V_N(x,\,t) |  \leq & \frac{b}{\pi} \Big| \sum_{n=1}^{N} \big[ \big(C_n(x,\,z)-C_n^\prime(x,\,z)\big)g_n(t-\pi/b) + \big(D_n(x,\,z)-D_n^\prime(x,\,z)\big) h_n(t-\pi/b) \big] \Big|  \\
& + \frac{b}{\pi} \Big| \sum_{n=N+1}^{+\infty}  \big[ C_n(x,\,z)g_n(t-\pi/b) + D_n(x,\,z) h_n(t-\pi/b) \big] \Big| \\
 \leq & \frac{b}{\pi} \Big[ 2\tilde{C}_1 N^2 \Big(\frac{\delta}{a}+a \Big)+ \frac{\tilde{C}_2}{N^{p-2}} \Big].
\end{aligned}
\end{equation*}
The proof is complete.
\end{proof}

\begin{remark}
\label{rem:stability analysis}
We end this section with the two remarks.
\begin{enumerate}
\item If we take $p=6$ and choose $N\sim 1/(\delta/a+a)^{1/6}$, then the reconstruction error \eqref{eq:V minnus V_N2} becomes $O\big( (\delta/a+a)^{2/3} \big)$. Therefore, if, in particular, the noise level satisfies $\delta=O(a^2)$, by choosing $N\sim 1/a^{1/6}$, the reconstruction error is $O(a^{2/3})$.

\item If the noise level is of the order $\delta=O(a^2)$, then the coefficients $A_n(x,\,z)$ and $B_n(x,\,z)$ can still be computed with an error $O(a)$, which represents the optimal accuracy achievable by this method. In this case, the Signal-to-Noise Ratio (SNR) is $O(1/a)$, since the measurement data $U(x,\,t)=W(x,\,t)= O(a)$ for $t>\tilde{T}$. From Remark~\ref{rem:b and c1}, we can design a droplet that satisfies $T_J \leq 2\pi/b$. Also, we can reconstruct the source more accurately with the decrease of the size of the droplet, however, with the stricter requirement of the measurement data or instruments that ensure noise levels of the order $O(a^2)$. To reconcile these facts, we need to find a balance for the size of the droplet. Moreover, from \eqref{eq:W_N error} or \eqref{eq:W_N error for the large t}, by choosing $N \geq 1/a$ or $N \geq 1/a^{1/(p-1)}$, the error caused by the truncation in \eqref{eq:W_expansion for inf} will be covered by the measurement noise, which is useful for generating simulation data.
\end{enumerate}
\end{remark}

\section{Numerical algorithms}
\label{sec:Numerical algorithms}
In this section, we focus on numerical schemes for reconstructing the source. To simulate the field $U$, we propose an efficient algorithm to solve the Lippmann-Schwinger equation \eqref{eq:Lippmann-Schwinger_equation 1}. Then, based on Theorem \ref{the:inverse sourece}, we develop an algorithm to reconstruct the field $V$ in the domain $\Omega$ with a small error. Consequently, we reconstruct the source $J$ by numerically differentiating the reconstructed field $V$. 

\subsection{An efficient algorithm for solving the Lippmann-Schwinger equation}
\label{subsec:algorithm for solving the Lippmann-Schwinger equation}
In this subsection, we develop an efficient algorithm to numerically solve the forward problem, which is employed to simulate the measurement data. The forward problem is formulated as the time-domain Lippmann-Schwinger equation \eqref{eq:Lippmann-Schwinger_equation 1}. Using convolution quadrature in time and trigonometric collocation in space, the paper \cite{L-M2015} computes an approximate solution and provides numerical results in two spatial dimensions. In contrast, we solve the time-domain Lippmann-Schwinger equation using the convolution spline method \cite{D-D2014,B-G-H2020} in time, combined with a high-order scheme for evaluating the weakly singular volume integral in space.

For simplicity, we assume that the droplet $D$ is spherical. Our algorithm can be extended to the general case with modifications to the shapes of $D$. It consists of three steps.

(i) We approximate the field in time using the convolution spline method. A uniform sequence of time steps $t_l=l \Delta t \in [0,\,\mathcal T],\,l=0,\,1,\,\ldots,\,\lfloor \mathcal T/\Delta t \rfloor$ is introduced for temporal discretization, where $\Delta t$ denotes the time step size and $\lfloor x \rfloor$ indicates the greatest integer less than or equal to $x$. We adopt the convolution spline as described in \cite{B-G-H2020}.
Let $q$ be an integer and $\{ \tau_{m},\,m=0,\,1,\,\ldots \}$ be the uniform grid on the interval $[0,\,\mathcal T)$. Let $S_{m}$ be the set of $2q+1$ nearest nodes for $\tau_{m}$. For $m<q$, $S_{m}=\{\tau_0,\,\ldots,\,\tau_{2q} \}$, and for $m\geq q$, $S_{m}=\{\tau_{m-q},\,\ldots,\,\tau_{m+q}\}$. Given the data $\{ y_{m},\, m=0,\,1,\,\ldots\}$ corresponding to $\{ \tau_{m},\ m=0,\,1,\,\ldots\}$, let $P_{m}(\tau)$ be the Lagrange interpolation polynomial of degree $2q$ defined by the data $\{ y_{j}\}$ on the stencil $S_{m}$, that is, $P_{m}(\tau_{j})=y_{j}$ for all $j$ with $\tau_{j} \in S_{m}$. On the interval $(\tau_{m},\,\tau_{m+1})$, the D-spline $D_{m+1/2}(\tau)$ is defined as the degree-$(4q+1)$ Hermite interpolation of $P_{m}(\tau)$ and $P_{m+1}(\tau)$. Precisely, for each $m$, $D_{m+1/2}(\tau)$ satisfies
\begin{equation*}
\frac{d^{\ell} D_{m+1/2}}{dt^{\ell}}(\tau_{m})= \frac{d^{\ell} P_{m}}{dt^{\ell}}(\tau_{m}), \quad
\frac{d^{\ell} D_{m+1/2}}{dt^{\ell}}(\tau_{m+1})= \frac{d^{\ell} P_{m+1}}{dt^{\ell}}(\tau_{m+1}), \quad \ell=0,\,\ldots,\,2q.
\end{equation*}
As the interpolation operators are linear and $D_{m+1/2}(\tau)$ depends only on the data $\{ y_{j}\}$ on $S_{m}\cup S_{m+1}$, there exist degree-$(4q+1)$ polynomials $\tilde\omega_{m+1/2,\,j}(\tau),\ \tau_{j}\in S_{m}\cup S_{m+1}$, such that
\begin{equation*}
D_{m+1/2}(\tau)=\sum\limits_{\{j:\,\tau_{j}\in S_{m}\cup S_{m+1}\}} y_{j} \tilde{\omega}_{m+1/2,\,j}(\tau),\ \ \tau\in (\tau_{m},\,\tau_{m+1}).
\end{equation*}
Let $m(\tau)$ be the index of the rightmost node not larger than $\tau$, i.e., such that $\tau_{m(\tau)}\leq \tau < \tau_{m(\tau)+1}$. Then the convolution spline $\omega_{j}(\tau)$ associated with the node $j$ and $\tau\in (0,\,T)$ is defined as
\begin{equation*}
\omega_{j}(\tau) = \left\{
\begin{aligned}
& \tilde{\omega}_{m(\tau)+1/2,\,j}(\tau), & \ \ &  \mathrm{if}\ j \  \mathrm{is \ such \ that \ } \tau_{j} \in S_{m}\cup S_{m+1},   \\
& 0, & \ \ & \mathrm{otherwise}.
\end{aligned}
\right.
\end{equation*}
These convolution splines are piecewise polynomials with compact supports and satisfy $\omega_{j}(\tau) \in C^{2q} [0,\,\mathcal{T}]$.

Then, for the field $U$, our interpolation algorithm takes the following form:
\begin{eqnarray}
&& U(y,\,t_{l}-\tau) \approx \sum\limits_{s=0}^{\infty} \omega_s (\tau) U(y,\,t_{l}-t_{s}), \quad l=1,\,\ldots,\,\lfloor \mathcal{T}/\Delta t \rfloor, \label{eq:intp1}\\
&& \frac{\partial^{2} U(y,\, t_{l}-\tau)}{\partial t^2} \approx \sum\limits_{s=0}^{\infty} \omega_{s}^{\prime\prime} (\tau) U(y,\,t_{l}-t_{s}), \quad l=1,\,\ldots,\,\lfloor \mathcal{T}/\Delta t \rfloor,  \label{eq:intp2}
\end{eqnarray}
where the spline function $\omega_s$ exhibits different forms and regularities depending on the order $q\in \mathbb{N}^+$. In this context, $n_q := \lceil\mathrm{diam}(D)/(c_0 \Delta t)\rceil +q$ is sufficiently large to capture all historical dependencies in the approximations \eqref{eq:intp1} and \eqref{eq:intp2} as inferred from Huygens' principle and the support of the convolution splines. Here, $\lceil x \rceil$ denotes the smallest integer greater than or equal to $x$ and $\mathrm{diam}(D)$ denotes the diameter of $D$.

(ii) We compute the volume integral with the weak singularity in \eqref{eq:Lippmann-Schwinger_equation 1}. For a smooth function $f: D \times (0,\,\mathcal T) \to \mathbb{R}$, we rewrite the integral in polar coordinates as
\begin{eqnarray}
\int_{D} \frac{f(y,\,t-c_0^{-1}|x-y|)}{4\pi |x-y|} dy && =\int\limits_{ \{(r,\,\theta,\,\varphi): \ y(r,\,\theta,\,\varphi)\in D \} } \frac{f(y,\,t-c_0^{-1}r)}{4\pi}  r \sin(\theta) dr d\theta d\varphi, \nonumber \\
&& =\int_0^1 \int_{0}^{\pi} \int_{0}^{2\pi} \frac{f \big( y(r^\prime,\theta,\varphi),\,t-c_0^{-1}r^{\prime}r_b(\theta,\,\varphi) \big)}{4\pi}  r^{\prime}r^2_b(\theta,\,\varphi) \sin(\theta) dr^{\prime} d\theta d\varphi, \label{eq:volume integration 2}
\end{eqnarray}
where $r=|x-y|=r^{\prime} r_b(\theta,\,\varphi)$, and $\theta,\,\varphi$ are parameters in polar coordinates with the origin at $x\in D$. Here, $r^{\prime}\in [0,\,1)$ and $r_b(\theta,\,\varphi) :=\sqrt{C_x^2+(a^2-|x|^2)}-C_x$ with $C_x:=x \cdot \big( \sin(\theta)\cos(\varphi),\,\sin(\theta)\cos(\varphi),\,\cos(\theta) \big)$. In this manner, the volume integral containing a weak singularity is converted into a regular volume integral in the unit ball.

To compute the integral \eqref{eq:volume integration 2}, we consider a spatial discretization in the unit ball. This involves combining discretizations on the surface and in the radial direction. Let $-1<\tilde{\beta}_{1}<\tilde{\beta}_{2}<\ldots<\tilde{\beta}_{N_s}<1$ be the zeros of the Legendre polynomial $P_{N_s}$ of order $N_s$ and let
\begin{equation}
\tilde{\alpha}_{j}:= \frac{2(1-\tilde{\beta}_{j}^{2})}{[N_s P_{N_s-1}(\tilde{\beta}_{j})]^{2}},\quad  j=1,\,\ldots,\,N_s,
\label{eq:alpha_j}
\end{equation}
denote the weights of the Gauss-Legendre quadrature rule. Then we choose a set of points $\hat{x}_{jk}$ on the unit sphere $\mathbb{S}^{2}$, defined by $\hat{x}_{jk}:=(\sin \theta_{j}\cos \phi_{k},\, \sin \theta_{j} \sin \phi_{k},\, \cos \theta_{j})$, $j=1,\,\ldots,\,N_s,\,k=0,\,\ldots,\,2N_s-1$,
where $\theta_{j}:=\arccos \tilde{\beta}_{j}$ and $\phi_{k}:=\pi k/N_s$. The quadrature of a smooth function $f:\mathbb{S}^{2}\to \mathbb{R}$ is approximated by the so-called Gauss trapezoidal rule
\begin{equation}
\int_{\mathbb{S}^2} f ds \approx \frac{\pi}{N_s} \sum_{j=1}^{N_s} \sum_{k=0}^{2N_s-1} \tilde{\alpha}_{j}f(\hat{x}_{jk}).
\end{equation}
The error associated with this quadrature decreases exponentially as $N_s$ increases; see \cite{Wienert1990} for details. 
Along the radial direction, we apply an $N_r$-node Gauss-Legendre rule. Let $r_\iota,\,\iota=1,\ldots,\,N_r$ be the nodes and $\eta_\iota,\,\iota=1,\ldots,\,N_r$ be the corresponding wights for the interval $[0,1]$. Therefore, there are $\overline{N}=2N_r N_s^2$ nodes for spatial discretization. The three-dimensional unit ball $B(0,\,1)$ can be parameterized by a function $x=\gamma(r^\prime,\,\theta,\,\varphi)$, where $\gamma: [0,\,1]\times[0,\,\pi]\times[0,\,2\pi] \to B(0,\,1)$. Under the map $\gamma$, we have
$x_i=r_\iota (\sin \theta_{j}\cos \varphi_{k},\, \sin \theta_{j} \sin \varphi_{k},\, \cos \theta_{j})$, $i=1,\,\ldots,\,\overline{N}$,
where $\iota,\,j,\,k$ are the indices corresponding to the directions $r^\prime,\,\theta,\,\varphi$ for the node $i$.

(iii) We consider a spatial interpolation of the field within the unit ball. For each point $(r^\prime,\,\theta,\,\phi)$, we select $2n_0+1$ nearest nodes, where $n_0 \in \mathbb{N}^{+}$ and $2n_0+1 \leq \min\{N_r,\,N_s\}$. Consequently, there are $(2n_0+1)^3$ nearest nodes surrounding the point $x$, whose indices form the set $P_x$. The interpolation is then performed using these nearest nodes as follows:
\begin{equation}
f(x)\approx \sum_{j \in P_x} f(x_j) L_j(x), \quad L_j(x) :=L_j^r(r^\prime) L_j^\theta(\theta) L_j^\phi(\phi),
\label{eq:interpolation_3d}
\end{equation}
where $L_j^r,\,L_j^\theta,\,L_j^\phi$ are the standard one-dimensional Lagrange polynomials corresponding to the nearest $2n_0+1$ Legendre nodes in their respective coordinates. For any node $j \notin P_x$, the interpolation weight satisfies $L_j(x)=0$.

We are now prepared to solve the Lippmann–Schwinger equation \eqref{eq:Lippmann-Schwinger_equation 1} following the three steps outlined above. We first solve the equation for $x\in D$, and then the solution for $x\in \mathbb{R}^3\setminus \overline{D}$ follows directly. Define a mapping $\mathcal{G}: B(0,\,1) \to D$ by
$\mathcal{G}(x):=z+ax,\, x\in B(0,\,1)$. Let $U(x,\,t_l):=U^l(x),\  l=1,\,\ldots,\,\lfloor \mathcal T/\Delta t \rfloor$. Applying the above discretizations and interpolation to the Lippmann–Schwinger equation \eqref{eq:Lippmann-Schwinger_equation 1}, we find
\begin{equation}
\begin{aligned}
U^l \big( \mathcal{G}(x_n) \big) & +\frac{\chi_1 a^2}{c_0^2} \sum_{\iota=1}^{N_r} \sum_{j=1}^{N_s} \sum_{k=0}^{2N_s-1} \frac{\eta_\iota \tilde{\alpha}_j}{4N_s} \omega_0 \big(c_0^{-1} r_\iota r_b(\theta_j,\,\varphi_k) \big) r_\iota \Big[\frac{r_b(\theta_j,\,\varphi_k)}{a}\Big]^2  \sum_{i^\prime=1}^{\overline{N}} L_{i^\prime} \big(\mathcal{G}(x_i) \big) U^l \big( \mathcal{G}(x_{i^\prime}) \big)  \\
& =\tilde{V} \big( \mathcal{G}(x_n),\,t_l \big), \quad  n=1,\,\ldots,\,\overline{N}, \ l=1,\ldots,\,\lfloor \mathcal{T}/\Delta t \rfloor,
\end{aligned}
\label{eq:time-lpm discretization}
\end{equation}
where $\iota,\,j,\,k$ are the indices corresponding to the directions $r^\prime,\,\theta,\,\varphi$ for the node $i$, and
\begin{eqnarray*}
&& \tilde{V} \big( \mathcal{G}(x_n),\,t_l \big) :=V \big(\mathcal{G}(x_n),\,t_l \big)  -\frac{\chi_1 a^2}{c_0^2} \sum\limits_{l^\prime=1}^{\mathrm{min}(n_q,\,l-1)} Q_{l,\,l^\prime}, \\
&& Q_{l,\,l^\prime}:=\sum_{\iota=1}^{N_r} \sum_{j=1}^{N_s} \sum_{k=0}^{2N_s-1} \frac{\eta_\iota \tilde{\alpha}_j}{4N_s} \omega_{l^\prime} \big(c_0^{-1} r_\iota r_b(\theta_j,\,\varphi_k) \big) r_\iota \left[\frac{r_b(\theta_j,\,\varphi_k)}{a}\right]^2  \sum_{i^\prime=1}^{\overline{N}} L_{i^\prime} \big( \mathcal{G}(x_i) \big) U^{l-l^\prime} \big( \mathcal{G}(x_{i^\prime}) \big).
\end{eqnarray*}
Then, from \eqref{eq:Lippmann-Schwinger_equation 1} and \eqref{eq:intp2}, the field $U(x,\,t)$ for $x\in \mathbb{R}^3\setminus \overline{D}$ is approximated by
\begin{equation}
    U(x,\,t)\approx V(x,\,t) - \frac{\chi_1 a^3}{c_0^2} \sum_{\iota=1}^{N_r} \sum_{j=1}^{N_s} \sum_{k=0}^{2N_s-1} \frac{\eta_\iota \tilde{\alpha}_j}{4N_s} r_\iota^2
    \sum_{l^\prime=0}^{\mathrm{min}(n_q,\,l-1)} \omega_{l^\prime}(d_i) \frac{ U^{n_i-l^\prime} \big(\mathcal{G}(x_i) \big)}{ \big|x-\mathcal{G}(x_i) \big| },
\end{equation}
where $n_i := \lfloor \big(t-c_0^{-1} |x-\mathcal{G}(x_i)| \big)/\Delta t \rfloor +1$ and $0< d_i :=n_i \Delta t- \big(t-c_0^{-1} |x-\mathcal{G}(x_i)| \big) \leq \Delta t$.

\subsection{An efficient algorithm for reconstructing the source}
In this subsection, we propose an algorithm for reconstructing the wave field $V$ and the source term $J$.
\begin{algorithm}
\label{alg:invese_source}
Assume that $J$ is compactly supported in $\Omega \times [0,\,T]$ and $J \in H^6_{0,\,\sigma}((0,\,T);\,L^2(\Omega))$.  Let $x^\star \in \mathbb{R}^3 \setminus \Omega$ be fixed and $z\in \Omega$. Given the measurement $U(x^\star,\,t)$ for $t\in[\tilde{T},\,\tilde{T}+2\pi/b]$ with $\tilde{T}> T_J +c_0^{-1}|x^\star-z|$. We reconstruct the source $J(y,\,t)$ for $(y,\,t)\in \Omega\times (0,\,T)$ using the following steps:

\textbf{Step 1. Compute the field $V(z,\,t)$ for $t \in (0,\,T)$.}
  \begin{itemize}
    \item For $n=1,\,2,\,\ldots,\,N$, compute the coefficients $\alpha_n(x^\star,\,z),\,A_n(x^\star,\,z)$, and $B_n(x^\star,\,z)$ by \eqref{eq:alpha_n}, \eqref{eq:A_n for reconstruction}, and \eqref{eq:B_n for reconstruction}, respectively.

    \item For $n=1,\,2,\,\ldots,\,N$, compute the coefficients $C_n(x^\star,\,z)$ and $D_n(x^\star,\,z)$ by \eqref{eq:C_n for reconstruction} and \eqref{eq:D_n for reconstruction}, respectively.

    \item Approximate the field $V(z,\,t)$ by $V_N(z,\,t)$ in \eqref{eq:VN_truncation}.
    
   \end{itemize}

\textbf{Step 2. Compute the field $\partial_t^2 V_N(z,\,t)$ for $t \in (0,\,T)$.}
  \begin{itemize}
    \item Compute the field $V_N(z,\,t_l)$ for the data 
    \begin{equation}
    \{ V_N(z,\,t_l) | \ t_l=t+l \Delta \tau,\, l=0,\,\pm 1,\,\pm 2,\,\ldots,\, \pm n_t \},
    \label{eq:V_N for n_t}
    \end{equation}
    where $\Delta \tau>0$ and $n_t \in \mathbb{N}^+$.

    \item Compute the field $\partial_t^2 V_N(z,\,t)$ using a mollification method with the data \eqref{eq:V_N for n_t}.
  
    The mollification method used here is described as follows. Define
    \begin{equation}
    g(x):=\left\{
    \begin{aligned}
    & e^{\frac{1}{|x|^2-1}} & \quad & |x|<1,  \\
    & 0, & \quad & |x|\geq 1,
    \end{aligned}
    \right.
    \end{equation}
    and
    \begin{equation}
    \eta(x):=\frac{g(x)}{ \int_{\mathbb{R}} g(x)dx }, \quad \eta_\epsilon(x):=\frac{1}{\epsilon} \eta\Big(\frac{x}{\epsilon} \Big), \quad x\in \mathbb{R},
    \end{equation}
    where $\epsilon>0$ is a small parameter.
    Thus, $\eta_\epsilon(x)$ is a smooth function with $\mathrm{supp} \ \eta_\epsilon(x)= \{x \in \mathbb{R} | \ |x| \leq \epsilon \}$ and satisfies $\int_{\mathbb{R}} \eta_\epsilon(x) dx=1$.
    We approximate the function $f(x)$ and its derivative by
    \begin{equation}
    f_\epsilon(x) \approx \int_{\mathbb{R}} f(y)\eta_\epsilon(x-y) dy, \quad f_\epsilon^{\prime}(x)\approx \int_{\mathbb{R}} f(y)\eta_\epsilon^\prime(x-y) dy, \quad  x\in \mathbb{R}.
    \label{eq:f prime}
    \end{equation}
    In this context, the parameter satisfies $\epsilon :=(n_t+1) \Delta \tau$. By repeating the procedure in \eqref{eq:f prime}, we can obtain the second-order derivative numerically.
    \end{itemize}

 \textbf{Step 3. Reconstruct the source through $J(z,\,t) \approx c_0^{-2} \partial_t^2 V_N(z,\,t)-\Delta V_N(z,\,t)$.}
 
  In this step, we focus on the second-order partial derivatives with respect to the three spatial variables and repeat the differentiation procedure outlined in \textbf{Step 2}.
 \end{algorithm}

\begin{remark}
When using the mollification method to compute the numerical differentiation of $f$, the error estimate is given by \cite[Theorem 1.1]{Murio1993}, and it can be minimized by choosing $\epsilon=[2\delta_f/3M_2\sqrt{\pi}]^{1/2}$. Here $\delta_f$ is the noise level of $f$ and $M_2$ is the upper bound of $f^{\prime\prime}$. To compute the second-order derivative of $f$, we then apply the mollification method \eqref{eq:f prime} once more, using the updated error estimate $\delta_{f^\prime} \leq 2\pi^{-1/4} \sqrt{6M_2\delta_f}$. For simplicity, we just choose the same parameters in the mollification method of two stages. Therefore, it is reasonable to choose $\delta_f^{1/2} \lesssim \epsilon \lesssim \delta_f^{1/4} $ roughly. After selecting the parameter $\epsilon$, from \eqref{eq:f prime} and the support of $\eta_\epsilon(x)$, we choose the step $\Delta \tau$ in our simulation to perform the numerical integration using the trapezoidal rule over the interval $(-\epsilon,\,\epsilon)$. A more elaborate second-order numerical differentiation in one dimension can be performed when sufficient data are available and the noise is small; see \cite{X-L2010,W-L2006} for details.
\end{remark}

\section{Numerical experiments}
\label{sec:Numerical experiments}
In this section, we assess the validity of the asymptotic expansion of the wave field $W$ and examine the effectiveness of the source reconstruction method through three-dimensional numerical experiments.

\subsection{Asymptotic expansion of the wave field}

We recall that $W_N$ defined by \eqref{eq:W_N 1} is an approximation of $W$ by the asymptotic expansion. We numerically investigate the influence of the droplet size $a$, the truncation number $N$, and the temporal regularity of $J$ on $W_N$. For comparison, we denote by $W_{\mathrm{LSE}}$ the numerical simulation of $W$ obtained by solving the Lippmann-Schwinger equation.

First, we demonstrate the convergence behavior of $W_N$ as $N$ goes to infinity. From~\eqref{eq:W_expansion for inf}, the dominant part of $W$ consists of $\xi_n(x,\,t)$ and $\zeta_n(x,\,t)$ for $n\in \mathbb{N}^+$. For the nonvanishing $V$ and $W$, from \eqref{eq:xi_n}, \eqref{eq:zeta_n}, and the argument presented in Theorem~\ref{the:convergence of W}, we have
\begin{equation}
    |\xi_n(x,\,t)+\zeta_n(x,\,t)| \sim a \big(n-\frac{1}{2} \big)^2 \Big(\int_{B(0,\,1)} e_n(y) dy\Big)^2.
\end{equation}

Figure~\ref{fig:e_n} illustrates the decay behavior of the quantity
$(n-1/2)^{2} (\int_{B(0,1)} e_n(y) dy)^2$
with respect to $n$. As shown in the left panel, this quantity decays rapidly as $n$ increases, and the first four terms are significantly larger than the subsequent terms. In the right panel, it is demonstrated that the sum $\sum_{n=1}^{N} (n-1/2)^{2} (\int_{B(0,1)} e_n(y) dy)^2$ converges rapidly, and it is evident that $N\geq 4$ is sufficiently large to ensure a small truncation error.

\begin{figure}[htbp]
   \centering
   \includegraphics[width=12cm, angle=0]{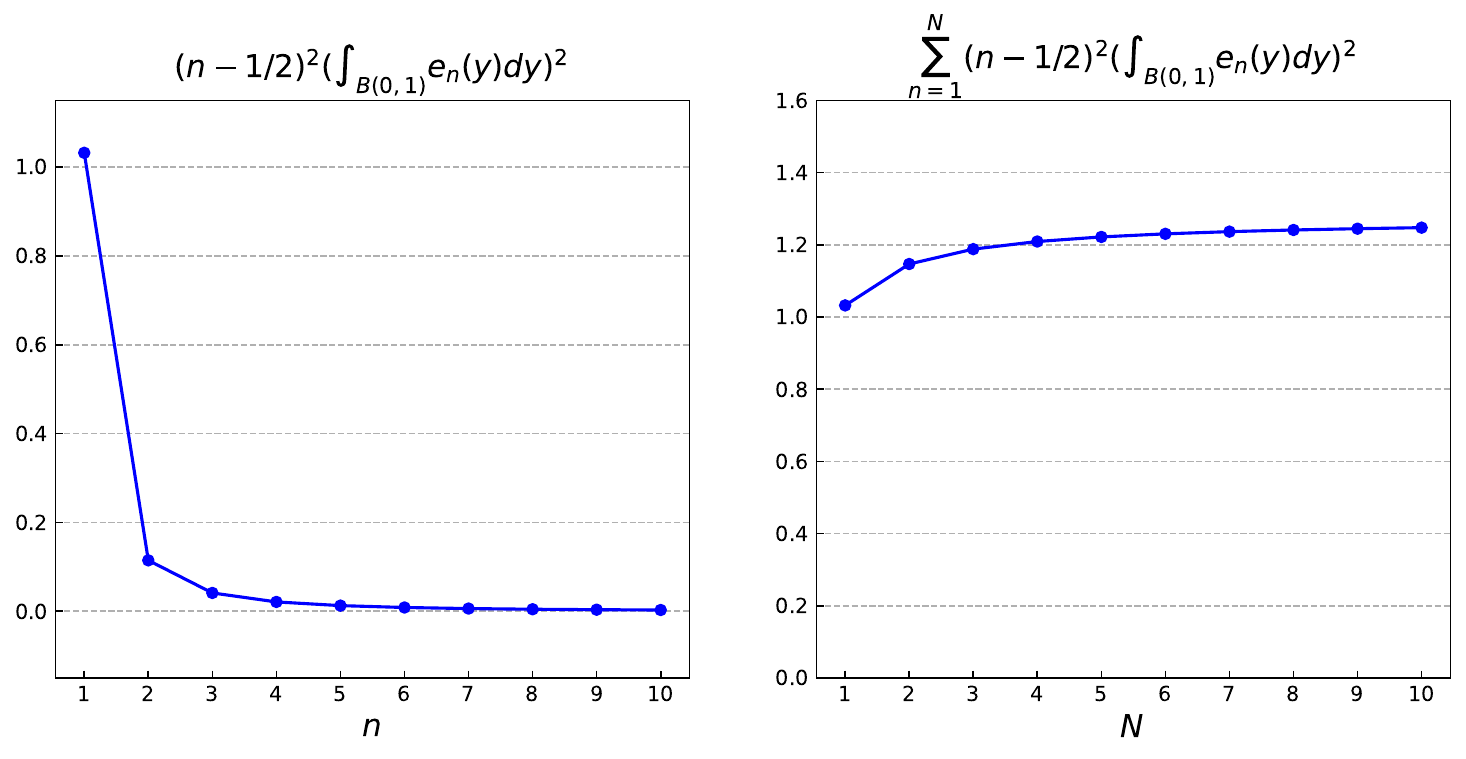}
   \caption{The decay behavior of $(n-1/2)^{2} (\int_{B(0,1)} e_n(y) dy)^2$ with respect to $n$.}
\label{fig:e_n}
\end{figure}

Next, we investigate the impact of the temporal regularity of $J$ and the droplet size $a$ on $W_N$.

\begin{example}
\label{exa:temporal regularity on W}
Consider a spherical domain $\Omega=B(0,\,1)$. Specifically, we define the causal function
\begin{equation}
V(x,\,t):= t^p \cdot (e^{r^2}+3x_2+x_3) H(t), \quad (x,\,t)\in \Omega\times(0,\,T), \quad p=3,\,4,
\label{eq:Vxt_example2}
\end{equation}
where $x=(x_1,\,x_2,\,x_3)$, $r=|x|$, and $H$ is the Heaviside function. Equivalently, the corresponding source is given by
\begin{equation*}
J(x,\,t)=c_0^{-2}V_{tt}(x,\,t)-\Delta V(x,\,t), \quad (x,\,t)\in \Omega\times(0,\,T).
\end{equation*}
\end{example}

In the numerical implementations, we take $T=4$, $q=2$, $\Delta t=0.1$, $c_0=1$, $b=4\pi$, $n_0=2$, $N_r=15$, and $N_s=12$. The droplet $D$ is injected at $z=(-0.2,\,0,\,0) \in \Omega$. 

\begin{figure}[htbp]
   \centering
   \includegraphics[width=12cm, angle=0]{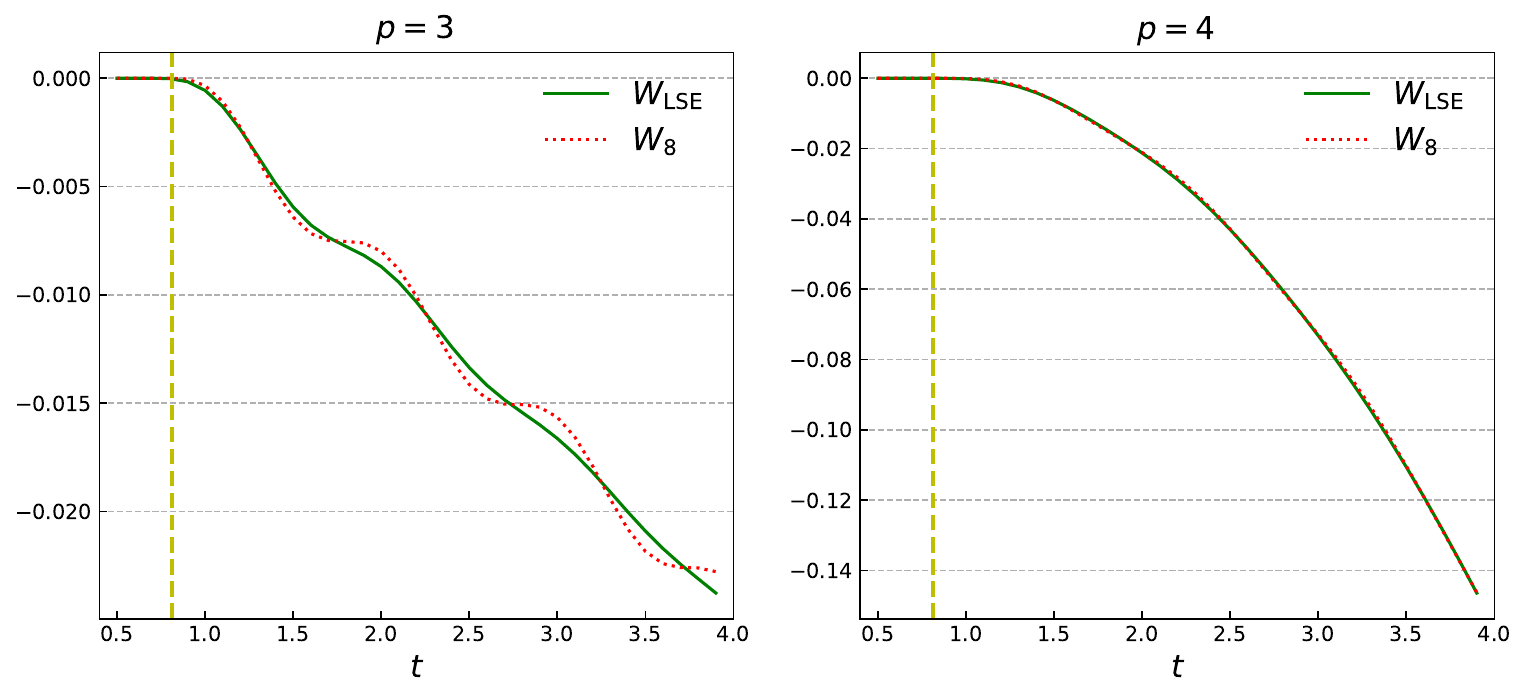}
   \caption{The wave field $W(x,\,t)$ evaluated at $x=(0.3,\,0.4,\,0.5)$, for sources with different temporal regularities as described in Example~\ref{exa:temporal regularity on W}. The left and right panels display the results for $p=3$ and $p=4$ in \eqref{eq:Vxt_example2}, respectively. The green solid line and the red dashed line represent the simulation results of $W_{\mathrm{LSE}}$ and $W_N$, respectively. The yellow dashed line indicates $t=c_0^{-1}|x-z|$, i.e., the first arrival time of the wave emitted from the source point $z$ to the point $x$.}
\label{fig:t_regular}
\end{figure}

Figure \ref{fig:t_regular} displays the simulation results of $W_{\mathrm{LSE}}(x,\,t)$ and $W_N$ for different $p$, with fixed $a=0.05$ and $N=8$. Note that $W(x,\,t)$ vanishes for $t<c_0^{-1}|x-z|$, which follows from the equations~\eqref{eq:Lippmann-Schwinger_equation 1} and~\eqref{eq:W_N 1}, since both $V$ and $J$ are causal. The time $t=c_0^{-1}|x-z|$ indicates the earliest arrival of a wave propagating from the source point $z$ to the point $x$. Clearly, the discrepancy between $W_{\mathrm{LSE}}(x,\,t)$ and $W_N$ decreases as the temporal regularity of the source increases. This behavior can be attributed to the use of the convolution spline method in solving the Lippmann–Schwinger equation, as this approach is particularly sensitive to the temporal regularity of the associated field.

Figure \ref{fig:W_p4} illustrates the simulation results of $W_{\mathrm{LSE}}(x,\,t)$ and $W_N$ for different $a$ and $N$, with fixed $p=4$. Notably, when $N\geq4$, increasing $N$ yields negligible improvement in accuracy. We observe that $W$ scales proportionally with the droplet size $a$, consistent with the analysis in Remark~\ref{rem:scale}. Furthermore, the difference $W_{\mathrm{LSE}}-W_N$ for $N=2,\,4,\,8$ is $O(a^2)$, consistent with Theorem \ref{the:W_expansion}. Moreover, the field $W_N(x,\,t)$ at $x=(0.3,\,0.4,\,0.5),\, t=3.5$ converges rapidly as $N$ increases, and its convergence behavior is analogous to $-\sum_{n=1}^{N} (n-1/2)^{2} (\int_{B(0,1)} e_n(y) dy)^2$.

\begin{figure}[htbp]
  \centering
	\begin{minipage}{1.0\linewidth}
		\vspace{3pt}
		\centerline{\includegraphics[width=12cm]{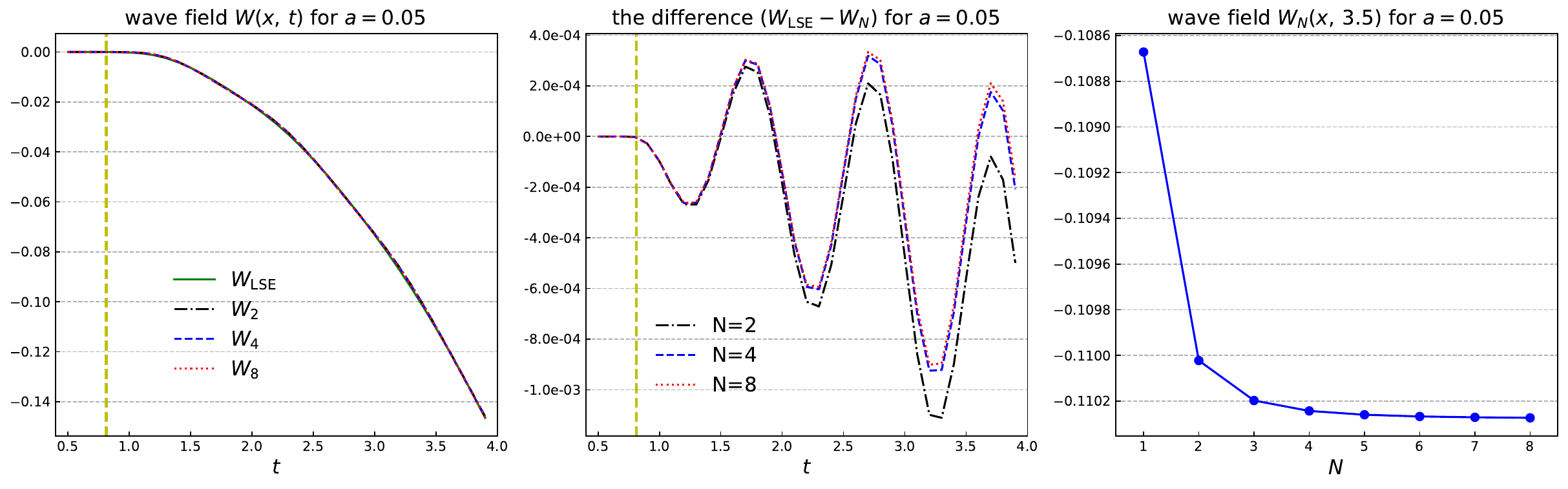}}
			\vspace{0.2cm}
		\centerline{\includegraphics[width=12cm]{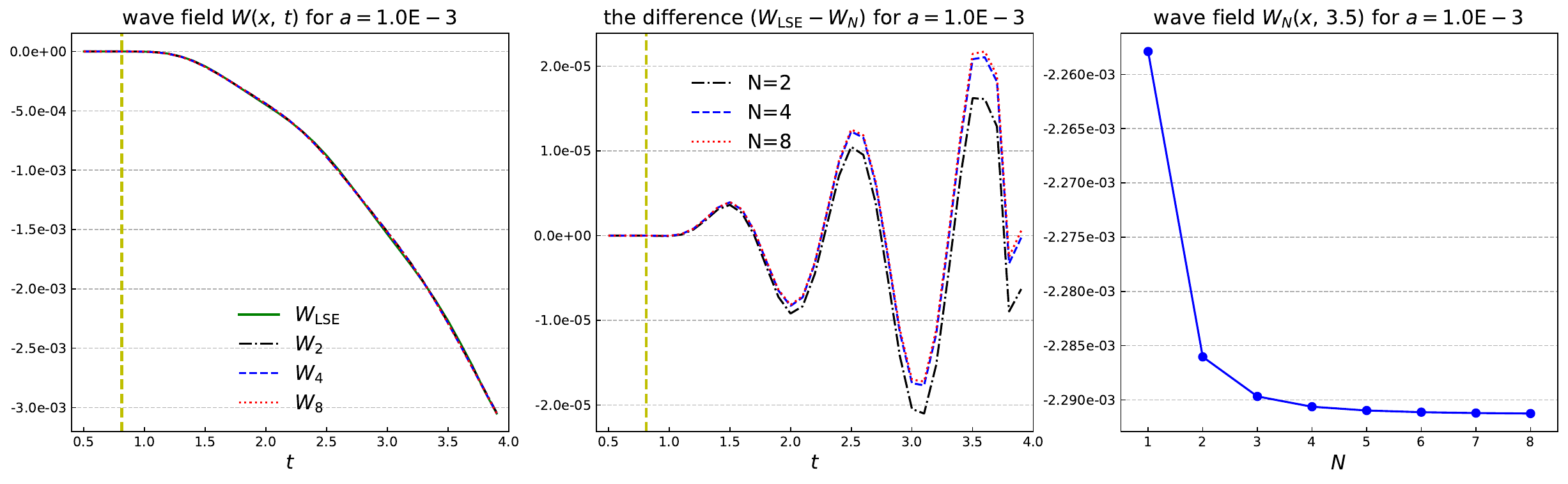}}
	\end{minipage}
	\caption{The numerical results of $W(x,\,t)$ at $x=(0.3,\,0.4,\,0.5)$ for Example \ref{exa:temporal regularity on W} with $p=4$. In the left panel, the green line represents the simulation result of $W_{\rm LSE}$. Meanwhile, the black, blue and red dashed lines represent the simulation results of $W_N$ for $N=2,\,4,\,8$, respectively. The yellow dashed line indicates $t=c_0^{-1}|x-z|$. The middle panel shows the differences $W_{\mathrm{LSE}} -W_N$ for $N=2,\,4,\,8$, respectively. And the right panel presents the wave field $W_N(x,\,t)$ at $x=(0.3,\,0.4,\,0.5),\, t=3.5$ for $N=1,\,2,\,\ldots,\,8$. The top and bottom panels present the results for $a=0.05$ and $a=1.0\mathrm{E}-3$, respectively.}
\label{fig:W_p4}
\end{figure}

From Figures~\ref{fig:t_regular} and~\ref{fig:W_p4}, we conclude that although Theorem~\ref{the:W_expansion} requires the source $J \in H^6_{0,\,\sigma}((0,\,T);\,L^2(\Omega))$, our method performs well for sources with lower temporal regularity. 

\subsection{Reconstruction of the source}
In this subsection, we reconstruct the source based on Theorem~\ref{the:inverse sourece} in theory and Algorithm~\ref{alg:invese_source} for its numerical implementation. We compare the reconstruction results with the exact values of $V,\,V_{tt}$ and $J$.

\begin{example}
\label{exa:rec-J}
Consider a cubic domain $\Omega=(-0.25,\,0.25)\times(-0.25,\,0.25)\times(-0.25,\,0.25)$. Define
\begin{equation*}
V(x,\,t):=\left\{
\begin{aligned}
& 10 \Big( \sin^3(t)\cdot\sin^2 \big(2(t-T) \big) \Big) \cdot (0.16-r^2) \cdot (e^{0.16-r^2}+2x_2+x_3),  \quad  (x,\,t)\in \Omega\times(0,\,T), \\
& 0, \quad  (x,\,t)\in \Omega\times(T,\,+\infty), 
\end{aligned}
\right.
\end{equation*}
where $x=(x_1,\,x_2,\,x_3)$ and $r=|x|$.
Equivalently, the corresponding source is determined by
\begin{equation*}
J(x,\,t)=c_0^{-2}V_{tt}(x,\,t)-\Delta V(x,\,t), \quad (x,\,t)\in \Omega\times(0,\,T).
\end{equation*}
Let $U(x^\star,t)$ denote the field after injecting the droplet $D$ into $\Omega$, measured at a fixed point $x^\star=(1.2,\,0,\,0) \in \mathbb{R}^3\setminus \Omega$ over the time interval $[\tilde{T},\,\tilde{T}+2\pi/b]$. Here, we consider the noisy measurement data
\begin{equation*}
U^{\delta}(x^\star,\,t):=U(x^\star,\,t)(1+ \overline{\delta} \eta),
\end{equation*}
where $\overline{\delta}$ is the relative noise level and $\eta$ is the uniform distribution on $(-1,\,1)$.
\end{example}

Take $T=1,\,c_0=1,\,b=2\pi,\,\tilde{T}=3.1$, and $\overline{\delta} =a$. We set the truncation number $N=20$ both for simulating the field $U(x^\star,t)$ via the expansion \eqref{eq:W_expansion for inf} and for reconstructing the field $V$. Following Algorithm~\ref{alg:invese_source}, we first reconstruct the field $V$ over the domain $\Omega^{\prime}:=(-0.12,\,0.12)\times (-0.25,\,0.25)\times (-0.25,\,0.25)$, using a uniform spatial step size $\Delta x= 0.01$ along all three axes. Based on the reconstructed field $V$ on this fixed spatial grid, we approximate the Laplacian $\Delta V$ and the second-order time derivative $V_{tt}$ via spatial and temporal numerical differentiation, respectively. These derivatives are computed using the mollification method, with a step size $\Delta\tau=\Delta x$, which is convenient given the uniform spatial sampling of the data. Moreover, we choose the mollification parameter $\epsilon=0.07$ for a relative noise level $\overline{\delta}=1.0\mathrm{E}-3$, and $\epsilon=0.04$ for $\overline{\delta}=1.0\mathrm{E}-4$. It should be noted that, due to the mollification process, the effective domain over which the source is reconstructed becomes slightly smaller.

\begin{figure}[htbp]
  \centering
	\begin{minipage}{1.0\linewidth}
		\vspace{3pt}
		\centerline{\includegraphics[width=12cm]{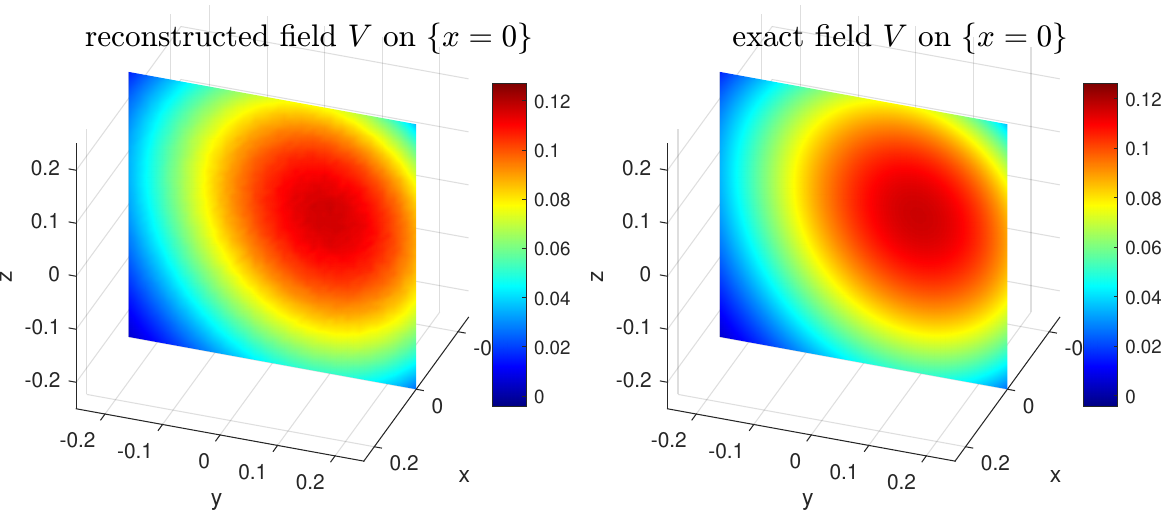}}
			\vspace{0.2cm}
            \centerline{\includegraphics[width=12cm]{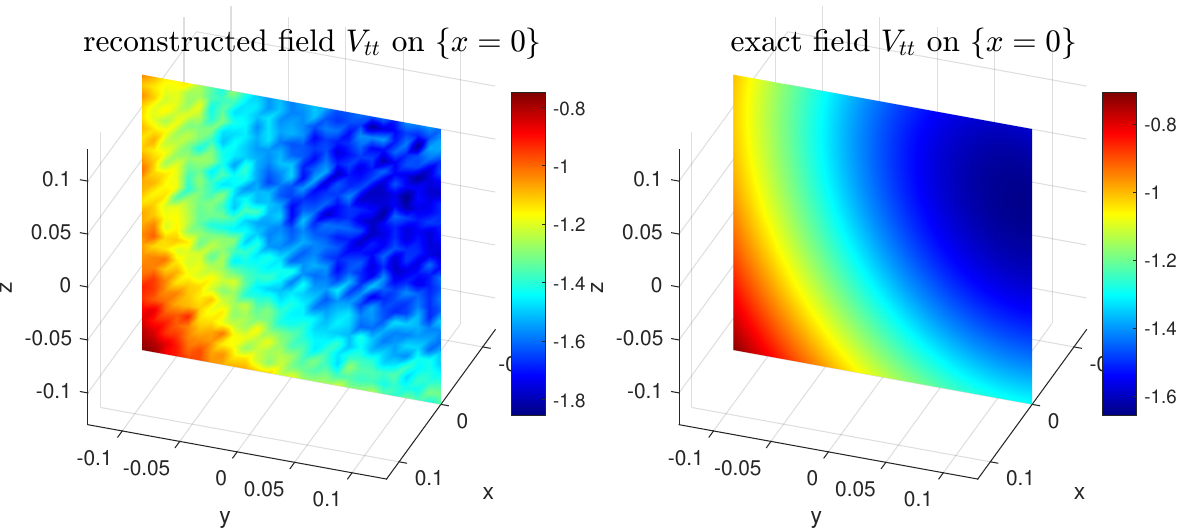}}
			\vspace{0.2cm}
		\centerline{\includegraphics[width=12cm]{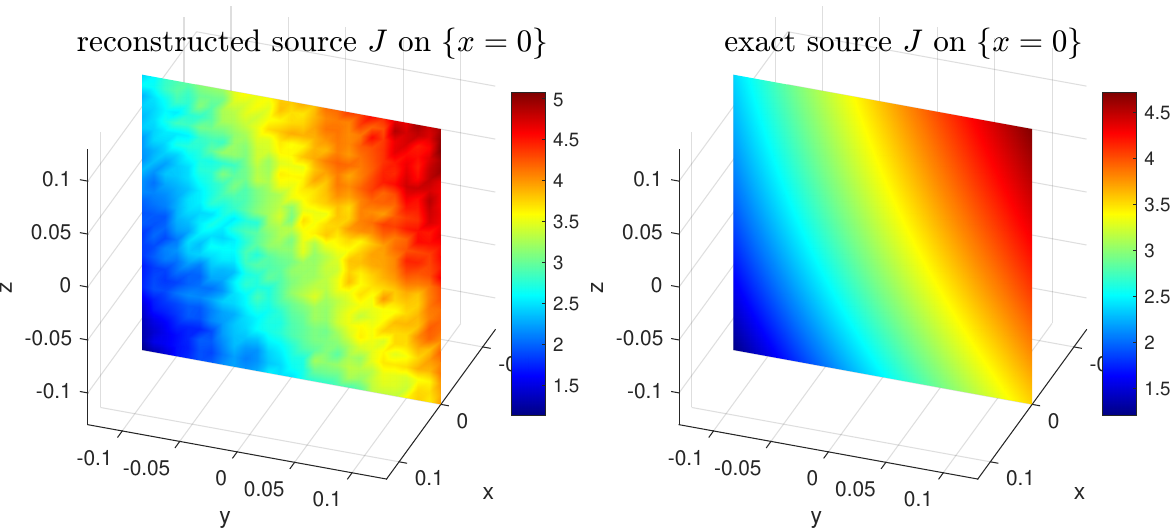}}
	\end{minipage}
	\caption{Comparison between the reconstructions and the exact ones at $t=0.8$ for Example \ref{exa:rec-J} with $a=\overline{\delta}=1.0\mathrm{E}-3$. The top, medium and bottom figures show the results of $V,\,V_{tt}$ and $J$, respectively. The left figures display the reconstructions and the right figures show the exact ones. In the top panel, $V$ is evaluated at $2601$ points on the slice $\{x=0\}$. On the snapshot of $V$, the reconstruction is performed in $(-0.25,\,0.25) \times (-0.25,\,0.25)$.  In the medium and bottom panels, $V_{tt}$ and $J$ are evaluated at $729$ points on the slice $\{x=0\}$. On the snapshots of $V_{tt}$ and $J$, the reconstructions are performed in $(-0.13,\,0.13) \times (-0.13,\,0.13)$.}
\label{fig:J_rec_a1e3}
\end{figure}

Figure \ref{fig:J_rec_a1e3} presents a snapshot of the reconstructed and exact fields for $a=1.0\mathrm{E}-3$ on the slice $\{x=0\}$ at $t=0.8$. The functions $V,\,V_{tt}$, and $J$ are reconstructed with the relative errors of $0.412\%,\,6.390\%$ and $11.706\%$ in the $L^2$-norm, respectively. Figure \ref{fig:J_rec_a1e4} presents a snapshot of the reconstructed and exact fields for $a=1.0\mathrm{E}-4$ on the slice $\{x=0\}$ at $t=0.8$. The functions $V,\,V_{tt}$, and $J$ are reconstructed with the relative errors of $0.040\%,\,5.244\%$ and $1.384\%$ in the $L^2$-norm, respectively. The relative errors of the reconstructions for $a=1.0\mathrm{E}-4$ are smaller than those for $a=1.0\mathrm{E}-3$. For both cases, the profiles of the reconstructions of $V,\,V_{tt}$, and $J$ are consistent with the exact ones.

For a higher noise level, such as $\overline{\delta}=1.0\mathrm{E}-2$, the field $V$ can still be reconstructed with a tolerable relative error of $4.27\%$ on the slice $\{x=0\}$. However, the accuracy of the second-order numerical differentiation and subsequent source reconstruction degrades significantly under this noise level. Using $\Delta\tau=0.02$ and $\epsilon=0.12$, the reconstructed source $J$ attains a relative $L^2$-norm error of $25.186\%$. Attempts to improve the reconstruction accuracy by adjusting these numerical differentiation parameters do not yield a noticeable reduction in error.

\begin{figure}[htbp]
  \centering
	\begin{minipage}{1.0\linewidth}
		\vspace{3pt}
		\centerline{\includegraphics[width=12cm]{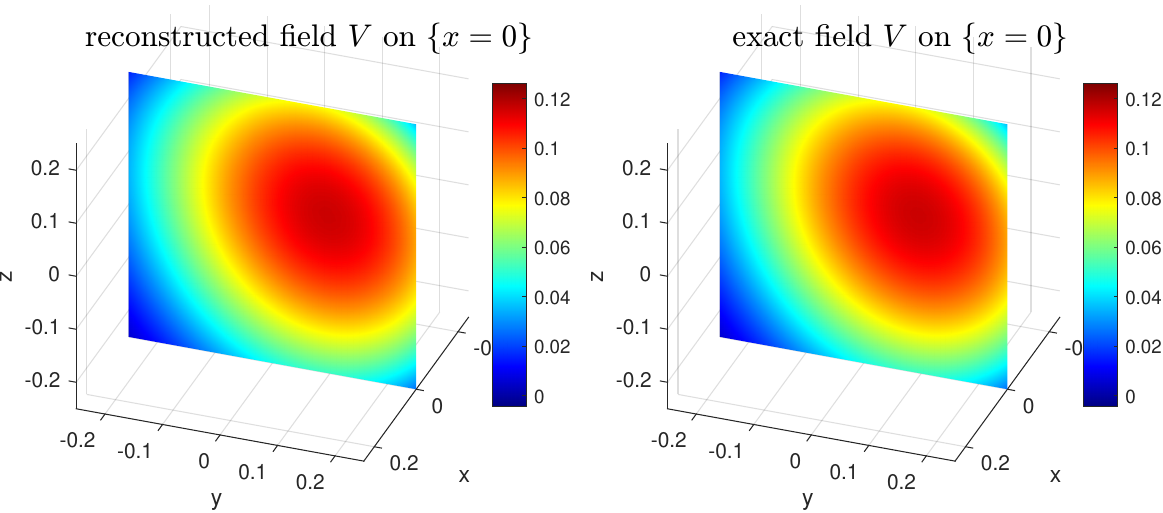}}
			\vspace{0.2cm}
            \centerline{\includegraphics[width=12cm]{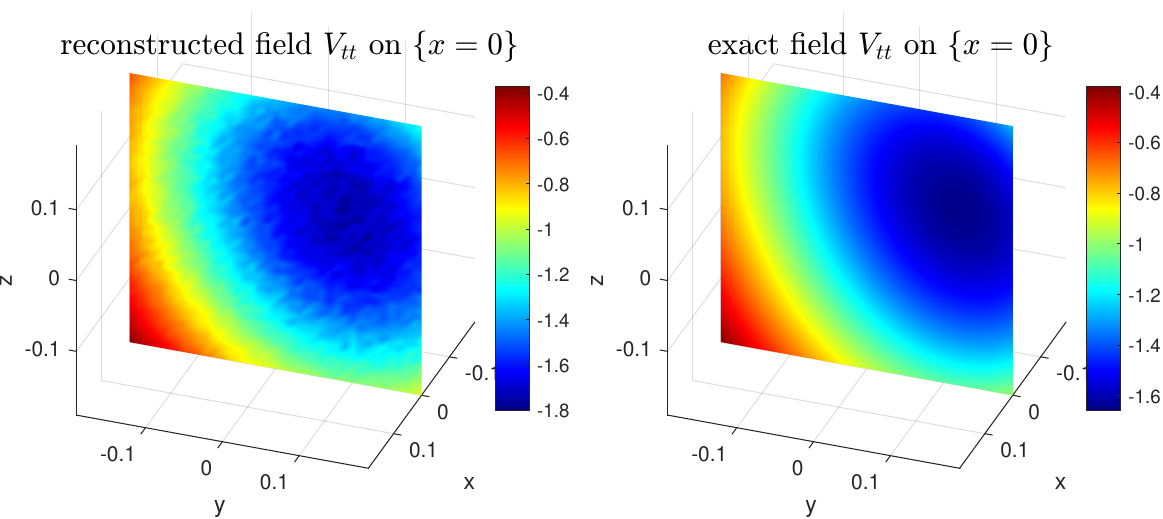}}
			\vspace{0.2cm}
		\centerline{\includegraphics[width=12cm]{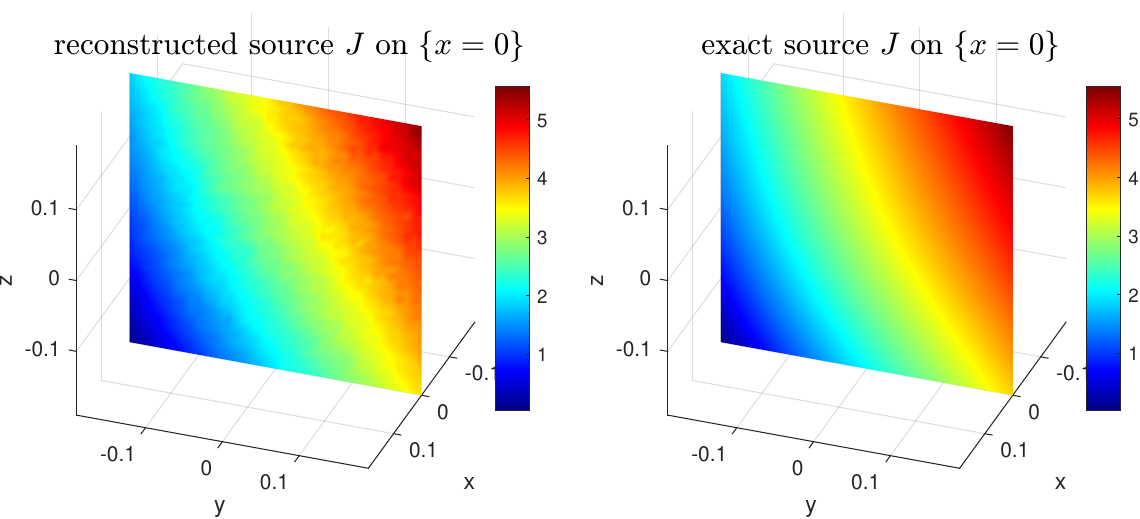}}
	\end{minipage}
	\caption{Comparison between the reconstructions and the exact ones at $t=0.8$ for Example \ref{exa:rec-J} with $a=\overline{\delta}=1.0\mathrm{E}-4$. The top, medium and bottom figures show the results of $V,\,V_{tt}$ and $J$, respectively. The left figures display the reconstructions and the right figures show the exact ones. In the top panel, $V$ is evaluated at $2601$ points on the slice $\{x=0\}$. On the snapshot of $V$, the reconstruction is performed in $(-0.25,\,0.25) \times (-0.25,\,0.25)$. In the medium and bottom panels, $V_{tt}$ and $J$ are evaluated at $1521$ points on the slice $\{x=0\}$. On the snapshots of $V_{tt}$ and $J$, the reconstructions are performed in $(-0.19,\,0.19) \times (-0.19,\,0.19)$.}
\label{fig:J_rec_a1e4}
\end{figure}

\section{Conclusions}
\label{sec:Conclusions}
In this paper, we investigated the inverse source problem for the wave equation in three dimensions. Considering the ill-posedness and measurement noise in applications, we proposed a new approach to reconstruct the source based on the droplet-induced asymptotics. After injecting a small droplet with high contrast at $z\in\Omega$, we measure the wave field $U$ at a fixed point $x\notin \Omega$ over a time interval. Note that this wave field can be approximately represented by an infinite series in terms of the eigensystem of the Newtonian operator, containing the information on $V$ before the droplet injection. Then we reconstructed the wave field $V$ at $z$ using the Riesz theory. We derived error estimates for both the asymptotic expansion of $U$ and the reconstruction of $V$. Moreover, we provided a criterion for selecting the truncation number in relation to the droplet size. After the wave field $V$ was reconstructed, we applied the regularization techniques to numerically differentiate it and further reconstructed the source. Numerical experiments validated the effectiveness of the proposed method for solving the inverse source problem in three dimensions. As part of future work, we expect to study the simultaneous reconstruction of multiple unknown parameters in the wave equation, such as the wave speed and source function.

\appendix
\section*{Appendix}
\setcounter{equation}{0}
\renewcommand{\theequation}{A.\arabic{equation}}
\renewcommand{\thelemma}{A.\arabic{lemma}}
\renewcommand{\thetheorem}{A.\arabic{theorem}}

In this appendix, we investigate the eigensystem of the Newtonian operator over a three-dimensional spherical domain, i.e., $D=\{ x\subset \mathbb{R}^3: |x|<a \}$, which is utilized in the asymptotic expansion of $W$.

Note that the Newtonian potential given by
\begin{equation}
u(x)=\mathcal N (f) = \int_D \Phi_0(x,\,y)f(y) dy, \quad x\in D
\label{eq:u_f}
\end{equation}
satisfies
\begin{equation*}
-\Delta u (x)=f(x), \quad x\in D,
\end{equation*}
where 
\begin{equation*}
\Phi_0(x,\,y):=\frac{1}{4\pi |x-y|}, \quad x\neq y,
\end{equation*}
is the fundamental solution to the Laplace equation. Consider the associated spectral problem
\begin{equation}
\mathcal{N}(u)(x)=\lambda u(x), \quad x\in D.
\label{eq:eigen system}
\end{equation}

To begin with, we present the boundary properties of the Newtonian potential as described in \cite{K-S2011}.

\begin{lemma}
\label{lem:BC-potential}
For any function $f\in L^2(D)$, the Newtonian potential \eqref{eq:u_f} satisfies the boundary condition
\begin{equation}
-\frac{u(x)}{2}-\int_{\partial D}\Phi_0(x,\,y) \frac{\partial u}{\partial \nu}(y) ds(y)
+ \int_{\partial D} \frac{\partial \Phi_0(x,\,y)}{\partial \nu(y)} u(y) ds(y)=0, \quad x\in \partial D,
\label{eq:BC-potential}
\end{equation}
where $\frac{\partial}{\partial \nu}$ denotes the outer normal derivative on the boundary.
Conversely, if a function $u\in H^2(D)$ satisfies the Poisson equation
\begin{equation}
-\Delta u(x)=f(x), \quad x\in D,
\end{equation}
and the boundary condition \eqref{eq:BC-potential}, then it determines the Newtonian potential \eqref{eq:u_f}.
\end{lemma}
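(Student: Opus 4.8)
The plan is to derive both implications from Green's representation formula (Green's third identity) on $D$, combined with the classical continuity and jump relations for the single- and double-layer potentials with kernel $\Phi_0$.

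For the direct statement I would first record a regularity fact: since $f\in L^2(D)$, the Newtonian potential $u:=\mathcal{N}(f)$ solves $-\Delta u=f$ in $D$ and, being the convolution of $\Phi_0$ with $f$ extended by $0$, lies in $H^2_{\mathrm{loc}}(\mathbb{R}^3)$ by Calder\'{o}n--Zygmund theory; in particular $u\in H^2(D)\hookrightarrow C(\overline{D})$, so its Dirichlet and Neumann traces on $\partial D$ are well defined. Applying Green's second identity on $D$ to $u$ and $v(y)=\Phi_0(x,y)$, and using $-\Delta_y\Phi_0(x,y)=\delta_x(y)$ together with $-\Delta u=f$, one obtains for every $x\notin\partial D$
\[
\mathcal{N}(f)(x)-\int_{\partial D}\frac{\partial\Phi_0(x,y)}{\partial\nu(y)}u(y)\,ds(y)+\int_{\partial D}\Phi_0(x,y)\frac{\partial u}{\partial\nu}(y)\,ds(y)=\begin{cases}u(x),& x\in D,\\ 0,& x\in\mathbb{R}^3\setminus\overline{D},\end{cases}
\]
the exterior value vanishing because there $\Phi_0(x,\cdot)$ is smooth on $\overline{D}$ and no $\delta$-contribution arises. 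Since $\mathcal{N}(f)$ and the single-layer potential are continuous across $\partial D$ whereas the double-layer potential carries the $\mp\tfrac12$ jump, the left-hand side above is discontinuous across $\partial D$ with one-sided limits $u(x_0)$ and $0$; hence its principal-value (direct) value on $\partial D$ equals the mean $\tfrac12 u(x_0)$, and rearranging this (recalling $u=\mathcal{N}(f)$) yields
\[
-\frac{u(x_0)}{2}-\int_{\partial D}\Phi_0(x_0,y)\frac{\partial u}{\partial\nu}(y)\,ds(y)+\int_{\partial D}\frac{\partial\Phi_0(x_0,y)}{\partial\nu(y)}u(y)\,ds(y)=0,
\]
which is precisely the asserted boundary condition.

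For the converse I would use that the interior Green's third identity above holds for \emph{any} $u\in H^2(D)$ with $-\Delta u=f$ in $D$, not only for $\mathcal{N}(f)$: such a $u$ satisfies, for $x\in D$,
\[
u(x)=\mathcal{N}(f)(x)+h(x),\qquad h(x):=-\int_{\partial D}\frac{\partial\Phi_0(x,y)}{\partial\nu(y)}u(y)\,ds(y)+\int_{\partial D}\Phi_0(x,y)\frac{\partial u}{\partial\nu}(y)\,ds(y),
\]
where $h$ is harmonic in $D$. Computing the interior boundary trace of $h$ by the double-layer jump relation and then inserting the hypothesized boundary condition shows that $h$ vanishes on $\partial D$; by uniqueness for the Dirichlet problem of the Laplacian, $h\equiv 0$ in $D$, so $u=\mathcal{N}(f)$ in $D$, i.e. $u$ is the Newtonian potential of $f$. (Equivalently, adding the boundary version of Green's third identity to the hypothesized identity already yields $u=\mathcal{N}(f)$ on $\partial D$, after which one applies Dirichlet uniqueness to the harmonic function $\mathcal{N}(f)-u$.)

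The only point that requires genuine care is justifying Green's identities and the layer-potential jump relations at the available regularity: $f$ is merely in $L^2(D)$, so the relevant traces lie in $H^{3/2}(\partial D)$ and $H^{1/2}(\partial D)$ rather than in a classical $C^1$ class. I would handle this by density --- approximating $f$ in $L^2(D)$ by smooth functions, for which the pointwise identities are classical, and passing to the limit using the boundedness of the trace operators and of the boundary integral operators associated with $\Phi_0$ --- or else simply invoke the potential-theoretic framework of \cite{K-S2011}, which is particularly clean here since $D$ is a ball. The remaining ingredients (the fundamental-solution property of $\Phi_0$, the identity $\int_{\partial D}\partial_{\nu(y)}\Phi_0(x,y)\,ds(y)=-1$ for $x\in D$ from the divergence theorem, and Dirichlet uniqueness) are classical and require no new work.
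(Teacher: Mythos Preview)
The paper does not actually supply a proof of this lemma; it merely states the result and attributes it to \cite{K-S2011}. So there is no in-paper argument to compare against, and your task reduces to checking that your proposal stands on its own.

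It does. Your route via Green's third identity and the jump relations for the double-layer potential is the standard one (and is, in fact, essentially the argument in \cite{K-S2011}). The two computations you sketch---averaging the interior and exterior limits of the representation formula to get the direct value $\tfrac12 u(x_0)$ on $\partial D$, and for the converse writing $u=\mathcal{N}(f)+h$ with $h$ harmonic and killing $h$ by Dirichlet uniqueness once the boundary condition forces $h|_{\partial D}=0$---are both correct and cleanly matched to the stated identity. Your remark that the regularity is handled either by density or by directly invoking the Sobolev-trace and layer-potential mapping properties is accurate; since $D$ here is a ball, there are no boundary-regularity subtleties, and the $H^2(D)$ setting makes all traces and jump relations available. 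Nothing is missing.
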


To compute the boundary integral involving spherical harmonics, we state the following result from \cite[Section 3.6]{C-K2019}.
\begin{lemma}
\label{lem:integral for phi Y}
For the spherical harmonics $Y_n$ of order $n$, we find
\begin{equation}
\int_{\mathbb{S}^2} \Phi_0(\hat{x},\,y) Y_n(y) ds(y)=\frac{1}{2n+1} Y_n(\hat{x}), \quad \hat{x}\in \mathbb{S}^2,
\label{eq:integral for phi Y}
\end{equation}
where $\mathbb{S}^2$ denotes the surface of the unit sphere.
\end{lemma}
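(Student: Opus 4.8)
The plan is to exploit that, once both arguments lie on $\mathbb{S}^2$, the kernel depends only on the angle: for $|\hat{x}|=|y|=1$ one has $|\hat{x}-y|^2 = 2(1-\hat{x}\cdot y)$, so $\Phi_0(\hat{x},y)$ is a function of $\hat{x}\cdot y$ alone. Hence the operator on the left of \eqref{eq:integral for phi Y} is a spherical convolution, and by the Funk--Hecke theorem it sends each spherical harmonic $Y_n$ to a scalar multiple $\lambda_n Y_n$; the whole matter reduces to computing $\lambda_n$ and showing $\lambda_n = 1/(2n+1)$.

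To pin down $\lambda_n$ cleanly I would avoid evaluating the weakly singular Legendre integral directly and instead use the generating function of the Legendre polynomials together with the addition theorem. First work at the interior point $x = r\hat{x}$ with $r\in(0,1)$, where $|x-y|^2 = 1 - 2r(\hat{x}\cdot y) + r^2$ is bounded away from zero and
\begin{equation*}
\frac{1}{4\pi|x-y|} \;=\; \frac{1}{4\pi}\sum_{m=0}^{\infty} r^{m} P_{m}(\hat{x}\cdot y), \qquad |y|=1,
\end{equation*}
the series converging uniformly in $y\in\mathbb{S}^2$ since $|P_m|\le 1$ and $\sum r^m<\infty$. Integrating term by term against $Y_n$ and applying the addition theorem $P_m(\hat{x}\cdot y) = \tfrac{4\pi}{2m+1}\sum_{k=-m}^{m} Y_m^k(\hat{x})\overline{Y_m^k(y)}$ followed by the $L^2(\mathbb{S}^2)$-orthonormality of the $Y_m^k$, every term with $m\ne n$ drops out and one obtains
\begin{equation*}
\int_{\mathbb{S}^2} \Phi_0(r\hat{x},\,y)\,Y_n(y)\,ds(y) \;=\; \frac{r^{n}}{2n+1}\,Y_n(\hat{x}), \qquad r\in(0,1).
\end{equation*}
Letting $r\to 1^{-}$ and using that the single-layer potential with density $Y_n$ is continuous up to (and across) $\mathbb{S}^2$ — $Y_n$ being smooth, in particular in $L^2(\mathbb{S}^2)$ — gives exactly \eqref{eq:integral for phi Y}. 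The one delicate point is this passage to the boundary $r=1$, where the Legendre series is only Abel-summable; it is precisely the continuity of the single-layer potential (alternatively, dominated convergence after a pointwise bound on $|r\hat{x}-y|^{-1}$) that legitimizes taking the limit under the integral sign.

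As an independent cross-check I would also record the purely potential-theoretic argument: set $u(x):=\int_{\mathbb{S}^2}\Phi_0(x,y)Y_n(y)\,ds(y)$, which is harmonic in the unit ball $B$ and in $\mathbb{R}^3\setminus\overline{B}$, continuous across $\mathbb{S}^2$, and $O(|x|^{-1})$ at infinity. Expanding $u$ in interior solid harmonics $\sum_{m,k}a_{mk}|x|^{m}Y_m^k$ inside $B$ and in decaying harmonics $\sum_{m,k}a_{mk}|x|^{-m-1}Y_m^k$ outside — the coefficients matching by continuity — the jump relation for the single-layer potential, $\partial_{\nu}^{-}u - \partial_{\nu}^{+}u = Y_n$ on $\mathbb{S}^2$, forces $(2m+1)a_{mk}$ to equal the $Y_m^k$-coefficient of $Y_n$; hence $u|_{\mathbb{S}^2} = \tfrac{1}{2n+1}Y_n$. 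The only care needed here is to keep the sign in the jump relation consistent with $\Phi_0$ being the fundamental solution of $-\Delta$. Either route isolates the same arithmetic core — the value $\lambda_n = 1/(2n+1)$, equivalently the standard Legendre integral $\tfrac12\int_{-1}^{1}P_n(t)\,[2(1-t)]^{-1/2}\,dt = 1/(2n+1)$ that appears if one applies the Funk--Hecke formula head-on — which I expect to be the only real obstacle.
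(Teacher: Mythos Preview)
Your proposal is correct; both the generating-function route and the potential-theoretic cross-check are valid and cleanly executed. Note, however, that the paper does not supply its own proof of this lemma at all: it simply quotes the result from \cite[Section~3.6]{C-K2019}. So rather than differing from the paper's argument, you have in effect reproduced the standard derivation that the cited reference contains --- namely the expansion of $\Phi_0(x,y)$ for $|x|<|y|$ via the Legendre generating function and the addition theorem, followed by passage to the boundary using continuity of the single-layer potential. Your alternative jump-relation argument is a nice self-contained substitute that avoids the limiting step altogether.
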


We now present another lemma related to the integral of spherical harmonics.
\begin{lemma}
\label{lem:integral for dp Y}
For the spherical harmonics $Y_n$ of order $n$, we have
\begin{equation}
\int_{\mathbb{S}^2} \frac{\partial \Phi_0(\hat{x},\,y)}{\partial \nu(y)} Y_n(y) ds(y)=\Big( \frac{n}{2n+1} -\frac{1}{2} \Big) Y_n(\hat{x}), \quad \hat{x}\in \mathbb{S}^2.
\label{eq:integral for dp Y}
\end{equation}
\end{lemma}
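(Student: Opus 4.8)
The plan is to reduce the computation of the normal derivative $\partial \Phi_0(\hat x,y)/\partial\nu(y)$ on $\mathbb S^2$ to the already-proven Lemma A.3 by exploiting the identity relating the double-layer potential of spherical harmonics to the single-layer one. Concretely, I would introduce the single-layer operator $(S\varphi)(\hat x) := \int_{\mathbb S^2}\Phi_0(\hat x,y)\varphi(y)\,ds(y)$ and the (boundary) double-layer operator $(K\varphi)(\hat x) := \int_{\mathbb S^2}\frac{\partial \Phi_0(\hat x,y)}{\partial\nu(y)}\varphi(y)\,ds(y)$. Lemma A.3 says exactly that $Y_n$ is an eigenfunction of $S$ with eigenvalue $\frac{1}{2n+1}$. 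The goal is to show $Y_n$ is an eigenfunction of $K$ with eigenvalue $\frac{n}{2n+1}-\frac12$.

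The key step is to pass from the single-layer to the double-layer by differentiating the harmonic extension. Let $v^-$ be the solution of the interior Dirichlet problem in the unit ball $B$ with data $Y_n$ on $\mathbb S^2$; by separation of variables $v^-(x)=|x|^n Y_n(x/|x|)$, so its interior normal derivative on $\mathbb S^2$ equals $n\,Y_n$. Likewise, the exterior harmonic extension $v^+(x)=|x|^{-(n+1)}Y_n(x/|x|)$ has exterior normal derivative $-(n+1)Y_n$ on $\mathbb S^2$. Now represent $v^\pm$ via Green's representation formula on $B$ (respectively on $\mathbb R^3\setminus\overline B$), evaluate on the boundary using the standard jump relations for the single- and double-layer potentials, and subtract the two boundary identities. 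The single-layer terms combine using Lemma A.3, the density $\frac{\partial v^-}{\partial\nu}-\frac{\partial v^+}{\partial\nu}=(n+(n+1))Y_n=(2n+1)Y_n$ appears, and the double-layer term with density $Y_n$ (the boundary value, continuous across $\mathbb S^2$) survives; solving the resulting scalar relation for $(K Y_n)(\hat x)$ yields $\big(\frac{n}{2n+1}-\frac12\big)Y_n(\hat x)$. An equivalent and perhaps cleaner route: apply Lemma A.2 (the boundary characterization of the Newtonian potential) to the harmonic function $u(x)=|x|^nY_n$, i.e.\ the Newtonian potential with $f\equiv 0$ would be trivial, so instead apply the jump relation directly — plug $u=v^-$ into the interior double-layer jump formula $K Y_n = \frac12 Y_n - v^-|_{\mathbb S^2} + (\text{single layer of } \partial_\nu v^-)$, then invoke Lemma A.3 on the last term with density $n\,Y_n$.

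I expect the main obstacle to be bookkeeping the sign and normalization conventions in the jump relations: whether $\partial/\partial\nu$ is the outward normal on $\partial D$ (as declared after \eqref{eq:BC-potential}), the $\pm\frac12$ in the double-layer jump, and the orientation of the exterior limit. A careful, self-contained way to sidestep this is to verify the claim on the lowest modes directly — $n=0$ gives $K\,1 = -\frac12\cdot 1$ on $\mathbb S^2$ (the well-known solid-angle identity for a point on a smooth surface), matching $\frac{0}{1}-\frac12=-\frac12$; $n=1$ gives $\frac13-\frac12=-\frac16$ — and then confirm the general formula by the harmonic-extension argument above, which is clean because on the sphere the interior and exterior extensions of $Y_n$ have explicitly known radial parts $|x|^n$ and $|x|^{-(n+1)}$. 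Once the density identity $\partial_\nu v^- = nY_n$ is in hand, the rest is a one-line substitution into Lemma A.3.
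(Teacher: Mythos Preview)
Your approach is correct but takes a different route from the paper. The paper proves the identity by considering the entire Helmholtz solution $u_n(x)=j_n(k|x|)Y_n(\hat x)$, applying Green's theorem on the unit ball for $|x|>1$ (so that the combined layer potential vanishes), passing to the exterior boundary limit with the double-layer jump, and finally sending $k\to 0$; the limit $\lim_{k\to 0}kj_n'(k)/j_n(k)=n$ produces exactly the factor $n$ multiplying $SY_n$, and Lemma~A.3 finishes. Your argument replaces the Helmholtz detour by working directly with the interior harmonic extension $v^-(x)=|x|^nY_n(\hat x)$, whose normal derivative on $\mathbb S^2$ is $nY_n$; the interior Green representation plus the double-layer jump gives $\tfrac12 Y_n+KY_n=S(nY_n)$, and Lemma~A.3 again closes the computation. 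Both proofs rest on the same three ingredients (Green's representation, the double-layer jump, and Lemma~A.3); yours is more elementary because it avoids the $k\to 0$ limit and the spherical Bessel asymptotics, while the paper's route has the advantage of reusing the Helmholtz machinery already cited from Colton--Kress. Your caution about sign bookkeeping is well placed: the interior-only version you sketch at the end (the identity $\tfrac12 u+Ku=S\partial_\nu u$ on $\mathbb S^2$ for interior harmonic $u$) is the cleanest way to execute it, and the exterior extension $v^+$ is not actually needed.
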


\begin{proof}
Consider the entire solution to the Helmholtz equation in $\mathbb{R}^3\setminus \{0\} $
\begin{equation}
u_n(x):= j_n(k|x|)Y_n(\hat{x}), \quad \hat{x}=\frac{x}{|x|},
\label{eq:u_n}
\end{equation}
where  $k$ is the frequency and the functions $j_n$'s are spherical Bessel functions given by
\begin{equation}
j_n(t):=\sum_{p=0}^\infty \frac{(-1)^p t^{n+2p}}{2^p p! 1\cdot 3 \cdots (2n+2p+1) }.
\label{eq:spherical Bessel}
\end{equation}

Applying Green's theorem \cite[Theorem 2.5]{C-K2019} to $u_n$ on a sphere of radius $r$, we obtain
\begin{equation}
\int_{|y|=r} \Big\{ u_n(y) \frac{\partial \Phi(x,\,y)}{\partial \nu(y)} - \frac{\partial u_n}{\partial \nu} (y) \Phi(x,\,y) \Big\} ds(y) =0.
\quad |x|>r,
\label{eq:Green-u}
\end{equation}
where 
\begin{equation*}
\Phi(x,\,y):=\frac{e^{ik |x-y|}}{4\pi |x-y|},\quad x\neq y,
\end{equation*}
is the fundamental solution to the Helmholtz equation. Noting that on $|y|=r$,
\begin{equation}
u_n(y)=j_n(kr) Y_n(\hat{y}), \quad \frac{\partial u_n}{\partial \nu} (y)=k j_n^\prime(kr)Y_n(\hat{y}),\quad \hat{y}=\frac{y}{|y|},
\end{equation}
and from \eqref{eq:Green-u}, we have
\begin{equation}
\int_{|y|=r}  \frac{\partial \Phi(x,\,y)}{\partial \nu(y)} Y_n(\hat{y})  ds(\hat{y}) = \frac{k j_n^\prime(kr)}{j_n(kr)}  \int_{|y|=r} \Phi(x,\,y) Y_n(\hat{y}) ds(y) .
\label{eq:sd_relation1}
\end{equation}
Considering $r=1$ and passing $|x|\to r$ from the exterior, we get
\begin{equation}
\int_{\mathbb{S}^2}  \frac{\partial \Phi(\hat{x},\,y)}{\partial \nu(y)} Y_n(y)  ds(y) + \frac{1}{2}Y_n(\hat{x})  = \frac{k j_n^\prime(k)}{j_n(k)}  \int_{\mathbb{S}^2} \Phi(\hat{x},\,y) Y_n(y) ds(y) , \quad \hat{x}\in \mathbb{S}^2.
\label{eq:sd_relation2}
\end{equation}
Here, we have utilized the jump of the double layer potential. By passing to the limit $k\to 0$, we observe from \eqref{eq:spherical Bessel} that
\begin{equation}
\lim _{k\to 0} \frac{k j_n^\prime(k)}{j_n(k)}=n,
\end{equation}
and the fundamental solution $\Phi(x,\,y)$ reduces to $\Phi_0(x,\,y)$. From \eqref{eq:sd_relation2} and Lemma \ref{lem:integral for phi Y}, we obtain \eqref{eq:integral for dp Y}.
\end{proof}

Now we reformulate the eigensystem of the Newtonian operator, following the approach outlined in \cite[Theorem 4.2]{K-S2011} and applying Lemmas \ref{lem:integral for phi Y} and \ref{lem:integral for dp Y} to \eqref{eq:BC-potential}.  

The eigenvalues $\lambda_{lj}$ are given by
\begin{equation}
\lambda_{lj}:=\frac{a^2}{[\mu_j^{(l+\frac{1}{2})}]^2},\quad l=0,\,1,\ldots, \ j=1,\,2,\,\ldots,
\label{eq:lambda-nj}
\end{equation}
where the $\mu_j^{(l+\frac{1}{2})}$ are the the roots of the equation
\begin{equation}
J_{l-\frac{1}{2}} (\mu_j^{(l+\frac{1}{2})}) =0.
\label{eq:eigen_value}
\end{equation}
The eigenfunctions corresponding to each eigenvalue $\lambda_{lj}$ can be represented in the form
\begin{equation}
u_{ljm} = \frac{1}{\sqrt{r}} J_{l+\frac{1}{2}} \big( r/\sqrt{\lambda_{lj}} \big) Y_l^m(\theta,\,\varphi), \quad m=-l,\,\ldots,\,0,\,\ldots,\,l,
\label{eq:eigen function}
\end{equation}
where $J_{l+\frac{1}{2}}$ are the Bessel functions and $Y_l^m(\theta,\,\varphi)$ are spherical functions.

Specially, for the case $l=0$, we have $m=0$. Note that
\begin{equation*}
J_{-\frac{1}{2}}(y)= \sqrt{\frac{2}{\pi y}} \cos(y), \quad y>0.
\end{equation*}
From \eqref{eq:eigen_value}, we have
\begin{equation}
\mu_j^{(\frac{1}{2})} = (j-\frac{1}{2} )\pi ,\quad j=1,\,2,\,\ldots
\label{eq:eigenvalue_nonvanish}
\end{equation}
and
\begin{equation}
u_{0j0}(x)= \frac{1}{r} \sqrt{ 2 a / \big(\pi \mu_j^{(\frac{1}{2})} \big) } \sin \Big( \frac{\mu_j^{(\frac{1}{2})} r}{a} \Big),
\quad r=|x|>0, \quad j=1,\,2,\,\ldots,
\label{eq:eigenfunction_nonvanish}
\end{equation}
which are utilized in the asymptotic expansion in Theorem~\ref{the:W_expansion}, with nonvanishing averages.

Note that
\begin{equation}
\begin{aligned}
\mathcal{N}(u_{ljm})(x) &= \int_D \Phi_0(x,\,y) u_{ljm}(y) dy \\
& =\int_0^a \int_0^\pi \int_0^{2\pi} \frac{1}{4\pi \big| x-y(r,\,\theta,\,\varphi) \big|}  \Big( \frac{1}{\sqrt{r}}J_{l+\frac{1}{2}} \big(r/\sqrt{\lambda_{lj}} \big) Y_l^m(\theta,\,\varphi) \Big) r^2\sin(\theta) dr d\theta d\varphi.
\label{eq:N_u integral}
\end{aligned}
\end{equation}
We redefine the eigenfunctions at $r=0$ as
\begin{equation}
u_{ljm} |_{r=0} =\left\{
\begin{aligned}
 & \sqrt{\frac{2 \mu_j^{(\frac{1}{2})} }{\pi a}}, & \quad l=0, \\
 & 0, & \quad l>0.
\end{aligned}
\right.
\end{equation}
Therefore, there is only one singularity point $x$ in the integral \eqref{eq:N_u integral}. For the numerical computation of \eqref{eq:N_u integral}, see \eqref{eq:volume integration 2}.

From \eqref{eq:eigen system}, we define the error as
\begin{equation}
\mathrm{\textbf{err}}:= \Big( \sum_{i=1}^{\overline{N}} \Big| u(x_i) - \lambda^{-1} \mathcal{N}(u)(x_i) \Big| \Big)/ \overline{N},
\end{equation}
to evaluate the eigensystem represented by \eqref{eq:lambda-nj} and \eqref{eq:eigen function}. Consider a ball with radius $a=1$ and set $N_r=15,\,N_s=12$. The points are selected evenly within the interval $[-1,\,1]$ with a step size $\Delta x=2/19$ along all the three axes. This results in $\overline{N}=3112$ points within the ball $B(0,\,0.95)$. Table~\ref{tab:n_1} shows the errors for $j=1,\,2,\,\ldots,\,6$ with different $m$ and $l$. As $j$ increases, the eigenfunction becomes more oscillating, resulting in a larger error. In general, the error remains small, and the eigensystem is validated numerically.

\begin{table}[!htbp]
    \caption{$\mathrm{\textbf{err}}$ of the eigensystem}	\centering
	\begin{tabular}{c c c c c c c}
		\hline
		\  & $j=1$ & $j=2$ & $j=3$ & $j=4$ & $j=5$ & $j=6$ \\
		\hline
		$l=0,\,m=0$  & $3.137\mathrm{E}-16$ & $3.725\mathrm{E}-15$ & $1.905\mathrm{E}-11$ & $4.350\mathrm{E}-8$ & $1.024\mathrm{E}-5$ & $5.702\mathrm{E}-4$ \\
		$l=1,\,m=0$  & $4.627\mathrm{E}-16$ & $1.636\mathrm{E}-13$ & $1.482\mathrm{E}-9$ & $7.091\mathrm{E}-7$ & $6.195\mathrm{E}-5$ & $1.697\mathrm{E}-3$ \\
            $l=1,\,m=1$  & $3.881\mathrm{E}-16$ & $1.720\mathrm{E}-13$ & $1.534\mathrm{E}-9$ & $7.166\mathrm{E}-7$ & $6.043\mathrm{E}-5$ & $1.579\mathrm{E}-3$ \\
		\hline
	\end{tabular}
    \label{tab:n_1}
\end{table}

\bigskip
{\bf Acknowledgment:} The first and third authors were supported by the National Natural Science Foundation of China (Nos. 12471395, 12241102), the Jiangsu Provincial Scientific Research Center of Applied Mathematics under Grant No. BK20233002, and the Big Data Computing Center of Southeast University. The second author was supported by the Austrian Science Fund (FWF) grants P: 36942 and P: 32660.

\end{sloppypar}

\begin{thebibliography}{99}
\setlength{\itemsep}{0mm}

\bibitem{B-G-H2020} A. Barnett, L. Greengard, T. Hagstrom, High-order discretization of a stable time-domain integral equation for $3D$ acoustic scattering, J. Comput. Phys., 402 (2020), 109047.

\bibitem{B-Y2017} M. Bellassoued, M. Yamamoto, Carleman Estimates and Applications to Inverse Problems for Hyperbolic Systems, Springer, Tokyo, 2017.

\bibitem{B-B-C2011} G. Bellizzi, O. M. Bucci, I. Catapano, Microwave cancer imaging exploiting magnetic nanoparticles as contrast agent, IEEE Trans. Biomed. Eng., 58 (2011), 2528--2536.

\bibitem{C-C-M2018} D. P. Challa, A. P. Choudhury, M. Sini, Mathematical imaging using electric or magnetic nanoparticles as contrast agents, Inverse Probl. Imaging, 12 (2018), 573--605.

\bibitem{C-L-Y2023} J. Cheng, S. Lu, M. Yamamoto, Determination of source terms in diffusion and wave equations by observations after incidents: uniqueness and stability, CSIAM Trans. Appl. Math., 4 (2023), 381--418.

\bibitem{C-K2019} D. Colton, R. Kress, Inverse Acoustic and Electromagnetic Scattering Theory, 4th Edition, Springer, Cham, 2019.

\bibitem{D-G-S2021} A. Dabrowski, A. Ghandriche, M. Sini, Mathematical analysis of the acoustic imaging modality using bubbles as contrast agents at nearly resonating frequencies, Inverse Probl. Imaging, 15 (2021), 555--597.

\bibitem{D-D2014} P. J. Davies, D. B. Duncan, Convolution spline approximations for time domain boundary integral equations, J. Integral Equations Appl., 26 (2014), 369--412.

\bibitem{F-Z-L-W2022} X. Feng, M. Zhao, P. Li, X. Wang, An inverse source problem for the stochastic wave equation, Inverse Probl. Imaging, 16 (2022), 397--415.

\bibitem{G-S-S-2025} A. Ghandriche, S. Senapati, M. Sini, Inverse problems in imaging using resonant contrast agents, In: A.~Morassi, A.~ Kawano (eds.), \textit{Inverse Problems for Mechanical Systems: Methods, Simulations and Experiments}, 195--246, CISM International Centre for Mechanical Sciences (CISM, volume 618), 2025.

\bibitem{G-S2022} A. Ghandriche, M. Sini, Photo-acoustic inversion using plasmonic contrast agents: the full Maxwell model, J. Differential Equations, 341 (2022), 1--78.

\bibitem{G-S-2023} A. Ghandriche, M. Sini, Simultaneous reconstruction of optical and acoustical properties in photoacoustic imaging using plasmonics, SIAM J. Appl. Math., 83 (2023), 1738--1765.


\bibitem{H-K-Z2020} G. Hu, Y. Kian, Y. Zhao, Uniqueness to some inverse source problems for the wave equation in unbounded domains, Acta Math. Appl. Sin. Engl. Ser., 36 (2020), 134--150.

\bibitem{I-I-F2018}
T. Ilovitsh, A. Ilovitsh, J. Foiret, et al.,
\newblock{Enhanced microdroplet contrast agent oscillation following 250 kHz insonation, Sci. Rep., 8 (2018), 16347.}

\bibitem{Isakov2017} V. Isakov, Inverse Problems for Partial Differential Equations, 3rd Edition, Springer, Cham, 2017.

\bibitem{intro4} P. K. Jain, I. H. El-Sayed, M. A. El-Sayed, \newblock{Au nanoparticles target cancer, Nano Today, 2 (2007), 18--29.}

\bibitem{J-E-T2022} H. Jebawy, A. Elbadia, F. Triki, Inverse moving point source problem for the wave equation, Inverse Problems, 38 (2022), 125003.

\bibitem{K-S2011} T. S. Kalmenov, D. Suragan, A boundary condition and spectral problems for the Newton potential, In: M. Ruzhansky, J. Wirth (eds.), \textit{Modern Aspects of the Theory of Partial Differential Equations}, 187--210, Springer, Basel, 2011.

\bibitem{K-L-Y2023} Y. Kian, Y. Liu, M. Yamamoto, Uniqueness of inverse source problems for general evolution equations, Commun. Contemp. Math., 25 (2023), 2250009.

\bibitem{Klibanov1992} M. V. Klibanov, Inverse problems and Carleman estimates, Inverse Problems, 8 (1992), 575--596.

\bibitem{L-M2015} A. Lechleiter, P. Monk, The time-domain Lippmann-Schwinger equation and convolution quadrature, Numer. Methods Partial Differential Equations, 31 (2015), 517--540.

\bibitem{L-C2015} 
W. Li, X. Chen,
\newblock{Gold nanoparticles for photoacoustic imaging, Nanomedicine, 10 (2015), 299--320.}

\bibitem{L-L-X2024} Y.-H. Lin, H. Liu, X. Liu, Determining a nonlinear hyperbolic system with unknown sources and nonlinearity, J. Lond. Math. Soc., 109 (2024), e12865.

\bibitem{Lubich1994} C. Lubich, On the multistep time discretization of linear initial-boundary value problems and their boundary integral equations, Numer. Math., 67 (1994), 365--389.

\bibitem{M-O2014} E. McLeod, A. Ozcan, Nano-imaging enabled via self-assembly, Nano today, 9 (2014), 560--573.

\bibitem{Murio1993} D. A. Murio, The Mollification Method and the Numerical Solution of Ill-posed Problems, John Wiley \& Sons, New York, 1993.

\bibitem{Qui-al-2009}
S. Qin, C. F. Caskey, K. W. Ferrara,
\newblock{Ultrasound contrast microbubbles in imaging and therapy: physical principles and engineering, Phys. Med. Biol., 54 (2009), R27--R57.}

\bibitem{Quaia-2007}
E. Quaia, 
\newblock{Microbubble ultrasound contrast agents: an update, Eur. radiol., 17 (2007), 1995--2008.}

\bibitem{S-S-2025}
S. Senapati, M. Sini, Minnaert frequency and simultaneous reconstruction of the density, bulk and source in the time-domain wave equation, Arch. Ration. Mech. Anal., 249 (2025), 48.

\bibitem{S-S-W-2024}
S. Senapati, M. Sini, H. Wang, Recovering both the wave speed and the source function in a time-domain wave equation by injecting contrasting droplets, Discrete Contin. Dyn. Syst., 44 (2024), 1446--1474.

\bibitem{Sheeran-Dayton} 
P. S. Sheeran, P. A. Dayton, 
\newblock{Phase-change contrast agents for imaging and therapy,  Curr Pharm Des., 18 (2012), 2152--2165.}

\bibitem{S-W2019} M. Sini, H. Wang, Estimation of the heat conducted by a cluster of small cavities and characterization of the equivalent heat conduction, Multiscale Model. Simul., 17 (2019), 1214--1251.

\bibitem{S-W2022} M. Sini, H. Wang, The inverse source problem for the wave equation revisited: a new approach, SIAM J. Math. Anal., 54 (2022), 5160--5181.

\bibitem{S-W-Y2021} M. Sini, H. Wang, Q. Yao, Analysis of the acoustic waves reflected by a cluster of small holes in the time-domain and the equivalent mass density, Multiscale Model. Simul., 19 (2021), 1083--1114.

\bibitem{W-L2006} T. Wei, M. Li, High order numerical derivatives for one-dimensional scattered noisy data, Appl. Math. Comput., 175 (2006), 1744--1759.

\bibitem{Wienert1990} L. Wienert, Die numerische Approximation von Randintegraloperatoren f\"ur die Helmholtz gleichung im $\mathbb{R}^{3}$, Dissertation, G\"ottingen, 1990.

\bibitem{X-L2010} H. Xu, J. Liu, Stable numerical differentiation for the second order derivatives, Adv. Comput. Math., 33 (2010), 431--447.

\bibitem{Yamamoto1995} M. Yamamoto, Stability, reconstruction formula and regularization for an inverse source hyperbolic problem by a control method, Inverse Problems, 11 (1995), 481--496.

\bibitem{Z-F2018} F. Zangeneh-Nejad, R. Fleury, Acoustic analogues of high-index optical waveguide devices, Sci. Rep., 8 (2018), 10401.

\bibitem{dielectric3}
G. P. Zograf, M. I. Petrov, S. V. Makarov, Y. S. Kivshar,
\newblock{All-dielectric thermonanophotonics, Adv. Opt. Photon., 13 (2021), 643--702.}

\end{thebibliography}
\end{document}